\documentclass[10pt,english,a4paper]{amsart}
\pdfoutput=1

\usepackage[latin1]{inputenc}
\usepackage[T1]{fontenc}
\usepackage{lmodern}
\usepackage{babel}
\usepackage{amsmath,amssymb,amsthm}
\usepackage[marginratio={1:1,1:1},totalwidth=400pt,totalheight=600pt]{geometry}
\usepackage{microtype}
\usepackage{hyperref}
\usepackage{url}

\newtheorem{theorem}{Theorem}[section]
\newtheorem{lemma}[theorem]{Lemma}
\newtheorem{proposition}[theorem]{Proposition}

\theoremstyle{definition}
\newtheorem{definition}[theorem]{Definition}
\newtheorem{example}[theorem]{Example}
\newtheorem{question}[theorem]{Question}
\newtheorem{remark}[theorem]{Remark}

\theoremstyle{remark}
\newenvironment{delayed-proof}[1][\proofname]{\proof[\bfseries \upshape #1]}{\endproof}

\numberwithin{equation}{section}

\newcommand{\Z}{\mathbb{Z}}

\newcommand{\T}{\mathbb{T}}
\newcommand{\F}{\mathbb{F}}
\newcommand{\C}{\mathbb{C}}

\DeclareMathOperator*{\aut}{Aut}

\newcommand{\op}{\operatorname}

\title[Operator algebras from Artin's representation]{Dynamical~systems and operator~algebras associated to Artin's~representation of braid~groups}

\author[Omland]{Tron Omland}
\address{Department of Mathematics\\University of Oslo\\NO-0316 Oslo\\Norway
\and
Department of Computer Science\\Oslo Metropolitan University\\NO-0130 Oslo\\Norway}
\email{trono@math.uio.no}

\date{April 24, 2019}

\subjclass[2010]{46L05 (Primary) 20F36, 22D25, 46L55 (Secondary)}
\keywords{Braid groups, $C^*$-dynamical systems, Twisted group $C^*$-algebras, $C^*$-simplicity} 

\begin{document}

\begin{abstract}
Artin's representation is an injective homomorphism from the braid group $B_n$ on $n$ strands into $\aut\F_n$, the automorphism group of the free group $\F_n$ on $n$ generators. The representation induces maps $B_n\to\aut C^*_r(\F_n)$ and $B_n\to\aut C^*(\F_n)$ into the automorphism groups of the corresponding group $C^*$-algebras of $\F_n$. These maps also have natural restrictions to the pure braid group $P_n$. In this paper, we consider twisted versions of the actions by cocycles with values in the circle, and discuss the ideal structure of the associated crossed products. Additionally, we make use of Artin's representation to show that the braid groups $B_\infty$ and $P_\infty$ on infinitely many strands are both $C^*$-simple.
\end{abstract}

\maketitle

\section{Introduction}

The theory of braids, developed by Artin \cite{Artin}, has been of interest to mathematicians from a wide variety of fields, and provides an intriguing mixture of algebra, analysis, and geometry. Originally, the motivation for studying braid groups came from knot theory, and we refer to the book by Birman \cite{Birman} for an introduction to braids in connection with knots, links, and mapping class groups. Our approach, on the other hand, will be almost purely algebraic.

The braid groups $B_n$ on $n$ strands can be formed from the standard presentation of the symmetric group $S_n$, by removing all order two relations. While $S_n$ is the group of permutations of an $n$-point set, the braid group can also be viewed as being of dynamical nature, acting by permutations twisted by inner automorphisms, where the most natural example to consider is the action of $B_n$ on the free group $\F_n$. This action is defined by an injective homomorphism $B_n\to\aut\F_n$, which is often called ``Artin's representation''. In this paper, we study Artin's representation from an operator algebraic viewpoint, letting instead $B_n$ act on the various group $C^*$-algebras of $\F_n$, and also allowing the permutations to be twisted by a scalar-valued cocycle.

After defining the $C^*$-dynamical systems, we investigate the corresponding crossed product $C^*$-algebras. For the reduced versions, we find conditions for the crossed products to be simple and have a unique tracial state, while for the full versions, we find conditions that ensure primitivity, that is, existence of a faithful irreducible representation.

We organize the paper as follows. First, in Section~\ref{sec:artin-braid}, we recall the definitions and some properties of braid groups and Artin's representation that we will need later, and include a short Section~\ref{sec:prelims} with some preliminaries on group $C^*$-algebras and semidirect products. Then, in Section~\ref{sec:deformed} we introduce the twisted versions of Artin's representations for operator algebras, and state our main theorems, before computing all the possible deformations, that is, the cocycles up to cohomology class. To prove our results, we apply techniques developed in \cite{SUT} for reduced twisted group $C^*$-algebras, especially the so-called ``relative Kleppner condition''. This is described in Section~\ref{sec:proofs}, which also contains the proofs of our main results, relying on several combinatorially involved arguments. In Section~\ref{sec:cohomology} we investigate the second cohomology groups and the twisted group $C^*$-algebras associated to different braid related groups in more detail. Finally, in Section~\ref{sec:infinite} we prove that the braid groups $B_\infty$ and $P_\infty$ on infinitely many strands are both $C^*$-simple, once again by using Artin's representation and the relative Kleppner condition.

\section{Braid groups and Artin's representation}\label{sec:artin-braid}

For every $2\leq n\leq \infty$, the \emph{braid group $B_n$} is defined by generators $s_1,s_2,\dotsc,s_{n-1}$ subject to relations
\[
\begin{gathered}
s_is_{i+1}s_i=s_{i+1}s_is_{i+1} \text{ for all } 1\leq i\leq n-2, \\
s_is_j=s_js_i \text{ when } \lvert i-j \rvert \geq 2.
\end{gathered}
\]
Moreover, for every $n$ there is a natural surjection from $B_n$ onto the symmetric group $S_n$, sending the generator $s_i$ to the permutation $(i,i+1)$, and thus $s_i^2$ to the identity of $S_n$ for all $i$. The \emph{pure braid group $P_n$} is defined as the kernel of the map $B_n \to S_n$.

There is an action $\alpha$ of $B_n$ on the free group $\F_n=\langle x_1,\dotsc,x_n \rangle$ given by
\begin{equation}\label{artin-rep}
\alpha(s_i)(x_j)=
\begin{cases}
x_{i+1} & \text{if } j=i, \\
x_{i+1}^{-1}x_i^{\vphantom{-1}}x_{i+1}^{\vphantom{-1}} & \text{if } j=i+1, \\
x_j & \text{else}.
\end{cases}
\end{equation}
This induces an injective homomorphism $B_n\to\aut\F_n$ called \emph{Artin's representation of $B_n$} (cf.\ \cite[Corollary~1.8.3]{Birman}; note that we use the inverse of their representation, to ensure compatibility with our definition of the semidirect product in Section~\ref{sec:semidirect}, see \cite[Section~1.5.1]{KT}). Remark in particular that $\alpha(s_i)(x_1x_2\dotsm x_n)=x_1x_2\dotsm x_n$ for all $i$. We also use $\alpha$ to denote the restriction of this action to $P_n$. The pure braid groups can now be described as iterative semidirect products by free groups, that is,
\[
P_{n+1} \simeq \F_n \rtimes_\alpha P_n \simeq \F_n \rtimes \F_{n-1} \rtimes \dotsb \rtimes \F_2 \rtimes \Z.
\]
It is possible to describe $P_n$ by generators
\[
a_{i,j} = s_{j-1}\dotsm s_{i+1}s_i^2s_{i+1}^{-1}\dotsm s_{j-1}^{-1}
\]
for $1\leq i<j\leq n$ and certain relations (see \cite[Lemma~1.8.2]{Birman} or \cite[Corollary~1.19]{KT}).
The isomorphism between this version of $P_{n+1}$ and the semidirect product described above
identifies the free group generated by $a_{1,n+1},a_{2,n+1},\dotsc,a_{n,n+1}$ with the normal subgroup $\F_n$.

Moreover, define the \emph{annular braid group $A_n$} as the (non-normal) subgroup of braids of $B_n$ that fix the endpoint of the last string (note that $P_n$ is the subgroup of braids of $B_n$ that fix the endpoint of \emph{all} strings). One can compute that
\[
A_{n+1} = \langle s_1^{\vphantom{2}},\dotsc,s_{n-1}^{\vphantom{2}},s_n^2 \rangle \subseteq B_{n+1}.
\]
The annular braid group was first studied by Chow \cite{Chow}. The group does not seem to have a standardized notation, e.g.\ it is denoted by $D_n$ in \cite[p.~22]{Birman} and by $B_n^1$ in \cite[3.8, 3.9, 4.6]{CoSu}. It is observed in several papers, see e.g.\ \cite[Proposition~2.1]{CrPa}, that
\[
A_{n+1} \simeq \F_n\rtimes_\alpha B_n,
\]
where $a_{1,n+1},\dotsc,a_{n,n+1}\in A_{n+1}$ generate $\F_n$ and $s_1,\dotsc,s_{n-1}\in A_{n+1}$ generate $B_n$.
In fact, the surjections $A_{n+1}\to B_n$ and $P_{n+1}\to P_n$ considered in this section, both with kernel $\F_n$, can be viewed as forgetting the last string.

Clearly, $P_n$ is also a normal subgroup of $A_n$ for all $n\geq 2$, and $A_n/P_n\simeq S_{n-1}$. For completeness, we set $A_1=P_1=B_1=\{e\}$. Thus $[B_n:P_n]=n!$, $[A_n:P_n]=(n-1)!$, and $[B_n:A_n]=n$ for all $n\geq 1$.

Define $\Delta:=s_1(s_2s_1)\dotsm(s_{n-1}\dotsm s_2s_1)$, sometimes called the fundamental element of $B_n$. 
For $n\geq 3$, it is well-known (see e.g.\ \cite[Corollary~1.8.4]{Birman} or \cite[Theorem~1.24, Exercise~1.3.2]{KT}) that the element $z:=\Delta^2$ generates the center of $B_n$, and that
\begin{equation}\label{braid-center}
z=\Delta^2=(s_1\dotsm s_{n-1})^n=a_{12}(a_{13}a_{23})\dotsm (a_{1n}a_{2n}\dotsm a_{n-1,n}).
\end{equation}
This element is a member of both $A_n$ and $P_n$ and generates the centers of those groups too. It is not hard to see that
\begin{equation}\label{center-action}
\alpha(z)(y)=(x_1\dotsm x_n)^{-1}y(x_1\dotsm x_n)
\end{equation}
for all $y\in\F_n$. Indeed, one checks that this holds when $y=x_i$, by applying \eqref{braid-center} and an iterative process using that for all $1\leq i\leq n-1$ we have
\begin{equation}\label{product-action}
\alpha(s_1\dotsm s_{n-1})(x_i)=x_{i+1}\quad\text{and}\quad\alpha(s_1\dotsm s_{n-1})(x_n)=(x_2\dotsm x_n)^{-1}x_1(x_2\dotsm x_n).
\end{equation}
Thus, in the semidirect product description, the generator of the center of $\F_n\rtimes_\alpha B_n$ is
\begin{equation}\label{semidirect-center}
x_1\dotsm x_nz=zx_1\dotsm x_n,
\end{equation}
where $z\in B_n$ is the element described in \eqref{braid-center}, and $\F_n=\langle x_1,\dotsc,x_n \rangle$.

\begin{remark}
\label{rem:ABP}
The group $A_{n+1}$ is isomorphic to the Artin group associated with the irreducible spherical (finite) Coxeter group with matrix entries $m_{n-1,n}=4$, $m_{i,i+1}=3$ when $1\leq i\leq n-2$, and $m_{ij}=2$ else. In the classification system for Coxeter groups, these groups are sometimes classified as type $B$, while the braid groups are of type $A$ (i.e., opposite of the symbols used above for the corresponding Artin groups).
In particular, we have
\begin{equation}\label{A3}
A_3 \simeq \langle t_1,t_2 : (t_1t_2)^2=(t_2t_1)^2 \rangle.
\end{equation}
Moreover, the map $P_3\to\F_2\times\Z=\langle v_1,v_2\rangle\times\langle u\rangle$ given by
\begin{equation}\label{P3}
s_2s_1^2s_2^{-1}\mapsto v_1,\quad s_2^2\mapsto v_2,\quad s_1^2\mapsto (v_1v_2)^{-1}u\quad (\text{and $z\mapsto u$})
\end{equation}
is an isomorphism. In fact, for all $n\geq 1$, we have $P_n \simeq (P_n/Z(P_n)) \times Z(P_n)$.
\end{remark}




\section{Preliminaries}\label{sec:prelims}

\subsection{Group \texorpdfstring{$C^*$}{C*}-algebras}

Here we give some brief preliminaries on group $C^*$-algebras in the slightly more general twisted setting, that we will need later.

Let $G$ be any discrete group with identity $e$. A normalized $2$-cocycle on $G$ is a map $\sigma\colon G\times G\to\T$ satisfying
\[
\begin{gathered}
\sigma(a,b)\sigma(ab,c)=\sigma(a,bc)\sigma(b,c) \\
\sigma(a,e)=\sigma(e,a)=1
\end{gathered}
\]
for all $a,b,c\in G$.

The left regular $\sigma$-projective representation $\lambda_\sigma\colon G\to B(\ell^2(G))$ is defined by
\[
\lambda_\sigma(a)f(b)=\sigma(a,a^{-1}b)f(a^{-1}b).
\]
The reduced twisted group $C^*$-algebra $C^*_r(G,\sigma)$ is the $C^*$-subalgebra of $B(\ell^2(G))$ generated by $\lambda_\sigma(G)$,
and the twisted group von~Neumann algebra $W^*(G,\sigma)$ is the von~Neumann subalgebra of $B(\ell^2(G))$ generated by $\lambda_\sigma(G)$.

The full twisted group $C^*$-algebra $C^*(G,\sigma)$ is the universal $C^*$-algebra generated by a set of unitaries $\{U_\sigma(a)\}_{a\in G}$ subject to the relations $U_\sigma(a)U_\sigma(b)=\sigma(a,b)U_\sigma(ab)$.

\medskip

Let $Z^2(G,\T)$ denote the group of normalized $2$-cocycles on $G$, and let $B^2(G,\T)$ denote its subgroup consisting of all $\sigma\in Z^2(G,\T)$ for which there exists a function $\beta\colon G\to \T$ such that 
\[
\sigma(a,b)=\beta(a)\beta(b)\overline{\beta(ab)} \text{ for all $a,b\in G$}.
\]
The second cohomology group of $G$ with values in $\T$ is then
\[
H^2(G,\T)=Z^2(G,\T)/B^2(G,\T).
\]
We say that $\sigma_1$ and $\sigma_2$ in $Z^2(G,\T)$ are \emph{similar} and write $\sigma_1\sim\sigma_2$ if their image coincide in $H^2(G,\T)$. If this is the case, there is an isomorphism between $C^*_r(G,\sigma_1)$ and $C^*_r(G,\sigma_2)$ given by $\lambda_{\sigma_1}(a)\mapsto\beta(a)\lambda_{\sigma_2}(a)$, where $\beta$ is the function implementing the similarity. The same also holds for the group von~Neumann algebra and for the full group $C^*$-algebra, with the $\lambda_{\sigma_i}(a)$'s replaced by the universal generators $U_{\sigma_i}(a)$ in the latter case.

\subsection{Semidirect products}\label{sec:semidirect}

An action $\alpha$ of a group $K$ on another group $H$ gives rise to a semidirect product $G$ consisting of elements $(x,a)\in H\times K$ and with multiplication given by $(x,a)(y,b)=(x\alpha_a(y),ab)$. As is common, we use the notation $G=H\rtimes_\alpha K$ for a semidirect product. One often identifies $H$ with a normal subgroup of $G$ and $K$ with a subgroup of $G$, and therefore drops the parentheses and just writes $xa$ for an element of $G$, and the action is then given by $\alpha_a(x)=axa^{-1}$.

Moreover, recall that $Z^1(K,\op{Hom}(H,\T))$ consists of all functions $\varphi\colon K\times H \to \T$ satisfying
\begin{equation}\label{1-cocycle}
\begin{gathered}
\varphi(ab,x) = \varphi(a,\alpha_b(x))\varphi(b,x), \\
\varphi(a,xy) = \varphi(a,x)\varphi(a,y).
\end{gathered}
\end{equation}
for $a,b\in K$ and $x,y\in H$. Such functions $\varphi$ are often called $1$-cocycles for $\alpha$ with values in $\T$. The subgroup $B^1(K,\op{Hom}(H,\T))$ of $Z^1(K,\op{Hom}(H,\T))$ consists of all functions $h\colon K\times H \to \T$ for which there exists $f\in\op{Hom}(H,\T)$ such that
\[
h(a,x) = f(\alpha_a(x))\overline{f(x)}
\]
for all $a\in K$ and $x\in H$.
The corresponding cohomology group is then defined as
\[
H^1(K,\op{Hom}(H,\T))=Z^1(K,\op{Hom}(H,\T))/B^1(K,\op{Hom}(H,\T)),
\]
and we write $\varphi_1 \sim \varphi_2$ if $\varphi_1,\varphi_2\in Z^1(K,\op{Hom}(H,\T))$ and $\varphi_1\overline{\varphi_2}\in B^1(K,\op{Hom}(H,\T))$.

\medskip

Let $G=H\rtimes_\alpha K$ be any semidirect product and let $\varphi\in Z^1(K,\op{Hom}(H,\T))$. Define $\sigma^\varphi\in Z^2(G,\T)$ by
\[
\sigma^\varphi((x,a),(y,b))=\varphi(a,y)
\]
for $x,y\in H$ and $a,b\in K$.
For the moment, write $\lambda_\varphi$ for the left regular $\sigma^\varphi$-projective representation of $G$ and $\lambda$ for the left regular representation of $H$.
Then
\[
\lambda_\varphi(a)\lambda_\varphi(x)\lambda_\varphi(a)^*=\sigma^{\varphi}(a,x)\overline{\sigma^{\varphi}(\alpha_{a}(x),a)}\lambda_\varphi(\alpha_a(x))=\varphi(a,x)\lambda(\alpha_a(x)),
\]
so there are $^*$-automorphisms of $C^*_r(H)$ satisfying
\[
\alpha^\varphi_a(\lambda(x))=\varphi(a,x)\lambda(\alpha_a(x)).
\]
It is now an easy exercise to check that $\alpha^\varphi_a\alpha^\varphi_b=\alpha^\varphi_{ab}$ using \eqref{1-cocycle}. Thus, $\alpha^\varphi$ defines an action of $K$ on $C^*_r(H)$, giving rise to a $C^*$-dynamical system for which we can form the reduced crossed product $C^*_r(H) \rtimes_{\alpha^\varphi}^r K$.

Similarly, we get actions of $K$ on $W^*(H)$ and $C^*(H)$, also denoted by $\alpha^\varphi$, producing $W^*$- and full $C^*$-crossed products. It is well-known (see \cite[Theorem~2.1]{Bed}, \cite[Theorem~1]{Bedos}, and \cite[Theorem~4.1]{PR1}, respectively) that
\[
C^*_r(G,\sigma^\varphi)\simeq C^*_r(H) \rtimes_{\alpha^\varphi}^r K, \quad W^*(G,\sigma^\varphi)\simeq W^*(H) \rtimes_{\alpha^\varphi}K,
\]
and
\[
C^*(G,\sigma^\varphi)\simeq C^*(H) \rtimes_{\alpha^\varphi}K.
\]
If $\varphi_1\sim \varphi_2$ via some $f$, define $\beta\colon G\to\T$ by $\beta(x,a)=f(x)$, and then $\alpha^{\varphi_1}\sim\alpha^{\varphi_2}$ via $\beta$.

\subsection{Simplicity, primitivity, Kleppner's condition, and tracial states}

Recall that a $C^*$-algebra is called \emph{simple} it has no nontrivial proper two-sided closed ideals, and \emph{primitive} if it has a faithful irreducible representation. Simplicity clearly implies primitivity, which again implies triviality of the center. A von~Neumann algebra is called a \emph{factor} if it has trivial center (or equivalently, if it does not have any von~Neumann algebra ideals).

A group $G$ is called \emph{icc} if every nontrivial conjugacy class in $G$ is infinite, and this notion generalizes to the twisted setting as follows: an element $a\in G$ is $\sigma$-regular if $\sigma(a,b)=\sigma(b,a)$ whenever $ab=ba$, and $\sigma$-regularity is a property of conjugacy classes, so inspired by \cite{Kle} we say that $(G,\sigma)$ satisfies \emph{Kleppner's condition} if there is no nontrivial finite $\sigma$-regular conjugacy class in $G$.

For a $2$-cocycle $\sigma$ on a countably infinite group $G$, it is explained in \cite[Theorem~2.7]{Om} that $W^*(G,\sigma)$ is a $\textup{II}_1$~factor if and only if $C^*_r(G,\sigma)$ is primitive, if and only if $C^*_r(G,\sigma)$ has trivial center, if and only if $(G,\sigma)$ satisfies Kleppner's condition.

Moreover, a state on a $C^*$-algebra $A$ is a linear functional $\phi\colon A\to\C$ that is positive, i.e., $\phi(S)\geq 0$ whenever $S\in A$ and $S\geq 0$, and unital, i.e., $\phi(1)=1$. A state $\phi$ is called tracial if it satisfies the additional property that $\phi(ST)=\phi(TS)$ for all $S,T\in A$.

There is a canonical faithful tracial state $\tau$ on $C^*_r(G,\sigma)$, namely the vector state associated with $\delta_e$, that is, $\tau(S)=\langle S\delta_e,\delta_e\rangle$ for all $S\in C^*_r(G,\sigma)$.

A group $G$ is called \emph{$C^*$-simple} if $C^*_r(G)$ simple and is said to have \emph{the unique trace property} if $\tau$ is the only tracial state on $C^*_r(G)$. It is shown in \cite[Theorem~1.3]{BKKO}, based on \cite{KK}, that $C^*$-simplicity is stronger than the unique trace property (in fact, strictly stronger by \cite{Boudec,Iva-Om}). If $G$ is $C^*$-simple (resp.\ has the unique trace property), then $C^*_r(G,\sigma)$ is simple (resp.\ has a unique tracial state) for every $\sigma\in Z^2(G,\T)$, cf.\ \cite[Corollaries~4.5 and~5.3]{BK}. In general, for a pair $(G,\sigma)$ with $\sigma\not\sim 1$, it is not known whether simplicity of $C^*_r(G,\sigma)$ implies that $\tau$ is its only tracial state.

Note that in \cite{FCH,SUT} a pair $(G,\sigma)$ is said to be $C^*$-simple (resp.\ have the unique trace property) when $C^*_r(G,\sigma)$ is simple (resp.\ has a unique tracial state).

\begin{remark}\label{FC-Z}
The group $B_n/Z(B_n)$ is $C^*$-simple for all $n\geq 1$, according to \cite[p.~536]{Bed}. Thus its normal subgroup $P_n/Z(P_n)$ is also $C^*$-simple for all $n\geq 1$, by applying \cite[Theorem~1.4]{BKKO}. Moreover, $A_n/Z(A_n)$ embeds into $B_n/Z(B_n)$ as a subgroup of finite index, and is therefore $C^*$-simple for all $n\geq 1$, by \cite[p.~216]{BdH}.

As a consequence, all these quotient groups are icc, which by \cite[Proposition~2.5]{FCH} means that every conjugacy class in $B_n$, $P_n$, or $A_n$ is either infinite or a one-element set.

Finally, we also note that for any of these groups, the center coincides with the amenablish radical defined in \cite[Section~6]{Iva-Om}.
\end{remark}

\section{Twisted versions of Artin's representation}\label{sec:deformed}

We now apply Section~\ref{sec:semidirect} to Artin's representation, i.e., with $H=\F_n$ and $K=B_n$ or $P_n$. 

\begin{definition}\label{AB-def}
Let $\varphi\colon B_n \times \F_n \to \T$ be an element of $Z^1(B_n,\op{Hom}(\F_n,\T))$, and let $\alpha$ be as in \eqref{artin-rep}. Define the reduced $\varphi$-deformed version of Artin's representation of braid goups as the $C^*$-dynamical system $\alpha^\varphi\colon B_n\to\aut C^*_r(\F_n)$ given on generators by
\[
\alpha^\varphi_{s_j}(\lambda(x_k)) = \varphi(s_j,x_k)\lambda(\alpha_{s_j}(x_k)).
\]
The von~Neumann algebra version $\alpha^\varphi\colon B_n\to\aut W^*(\F_n)$ is defined analogously.
\end{definition}
Let $\mu^\varphi\in\T$ be the product of the values of $\varphi$ on generators, i.e.,
\[
\mu^\varphi=\prod_{\substack{1\leq j\leq n-1\\1\leq k\leq n}}\varphi(s_j,x_k).
\]
As usual, an element $w\in\T$ is called nontorsion if $w^m\neq 1$ for all $m\in\Z\setminus\{0\}$.
\begin{theorem}\label{AB-theorem}
Let $\alpha$, $\varphi$, $\alpha^\varphi$, and $\mu^\varphi$ be as above.
Then the following are equivalent:
\begin{itemize}\itemsep2pt
\item[(i)] $\mu^\varphi$ is nontorsion,
\item[(ii)] $C^*_r(\F_n)\rtimes_{\alpha^\varphi}^r B_n$ is simple,
\item[(iii)] $C^*_r(\F_n)\rtimes_{\alpha^\varphi}^r B_n$ has a unique tracial state,
\item[(iv)] $W^*(\F_n)\rtimes_{\alpha^\varphi}B_n$ is a factor.
\end{itemize}
\end{theorem}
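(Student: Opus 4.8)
The plan is to translate everything into the twisted group $C^*$-algebra picture via the identifications from Section~\ref{sec:semidirect}. Since $A_{n+1}\simeq\F_n\rtimes_\alpha B_n$, we have $C^*_r(\F_n)\rtimes^r_{\alpha^\varphi}B_n\simeq C^*_r(A_{n+1},\sigma^\varphi)$ and $W^*(\F_n)\rtimes_{\alpha^\varphi}B_n\simeq W^*(A_{n+1},\sigma^\varphi)$, where $\sigma^\varphi\in Z^2(A_{n+1},\T)$ is built from $\varphi$. By the result of \cite{Om} quoted in Section~\ref{sec:prelims}, conditions (ii), (iii) (here using $C^*$-simplicity $\Rightarrow$ unique trace and the fact that $A_{n+1}$ has trivial amenablish radical coinciding with its center, so that simplicity and unique trace are linked via the machinery available), and (iv) will all follow once we show that $(A_{n+1},\sigma^\varphi)$ satisfies Kleppner's condition precisely when $\mu^\varphi$ is nontorsion; so the core of the argument is the equivalence of (i) with Kleppner's condition. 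A useful reduction: the hypotheses of the relative Kleppner condition from \cite{SUT} apply because $P_{n+1}=\F_n\rtimes_\alpha P_n$ is a normal subgroup of $A_{n+1}$ with $A_{n+1}/P_{n+1}\simeq S_{n-1}$ finite, so that $C^*$-simplicity/unique-trace questions for $(A_{n+1},\sigma^\varphi)$ reduce to checking a Kleppner-type condition on conjugacy classes meeting the center.

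First I would compute the relevant cohomology: one determines $H^1(B_n,\op{Hom}(\F_n,\T))$ — this is the set of ``possible deformations'' the introduction refers to — and shows that a cocycle $\varphi$ is determined up to similarity by the single scalar $\mu^\varphi$, using the braid relations $s_is_{i+1}s_i=s_{i+1}s_is_{i+1}$ and $s_is_j=s_js_i$ for $|i-j|\geq 2$ together with the two cocycle identities \eqref{1-cocycle}. The key structural input is \eqref{center-action}: the central element $z=\Delta^2$ acts on $\F_n$ by conjugation by $x_1\cdots x_n$, and $\alpha(s_i)$ fixes $x_1\cdots x_n$; consequently the element $w:=x_1\cdots x_n\,z$ of $A_{n+1}$ (cf.\ \eqref{semidirect-center}, which identifies it as the generator of the center of $\F_n\rtimes_\alpha B_n$) will be the element whose $\sigma^\varphi$-regularity governs Kleppner's condition. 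Since $A_{n+1}/Z(A_{n+1})$ is icc (Remark~\ref{FC-Z}), every finite conjugacy class in $A_{n+1}$ is central, so Kleppner's condition is equivalent to: no nontrivial element of $Z(A_{n+1})$ is $\sigma^\varphi$-regular.

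Next I would evaluate $\sigma^\varphi$ on the center. A direct computation, using $\sigma^\varphi((x,a),(y,b))=\varphi(a,y)$ and the expression $w=x_1\cdots x_n z$, should show that $w^k$ is $\sigma^\varphi$-regular iff the ``commutator pairing'' $\sigma^\varphi(w^k,g)\overline{\sigma^\varphi(g,w^k)}$ is trivial for all $g$ commuting with $w^k$, and that this pairing, when $g$ ranges over the generators $s_j$ and $x_k$, produces exactly the products of values of $\varphi$ on generators — i.e.\ it is controlled by $\mu^\varphi$ (more precisely by a power of $\mu^\varphi$, or $\mu^\varphi$ itself after using that $\varphi$ is a homomorphism in the second variable and the abelianization of $\F_n$ pairs $x_1\cdots x_n$ with everything). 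Thus $w$ is $\sigma^\varphi$-regular iff $\mu^\varphi$ is a root of unity, i.e.\ torsion; so Kleppner's condition holds iff $\mu^\varphi$ is nontorsion, which is (i). Finally, feeding this back through the \cite{Om} equivalence and the \cite{SUT} relative Kleppner machinery (to pass from $(A_{n+1},\sigma^\varphi)$ to actual simplicity rather than just primitivity, and to get the unique trace) closes the loop (i)$\Leftrightarrow$(ii)$\Leftrightarrow$(iii)$\Leftrightarrow$(iv).

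The main obstacle I anticipate is the passage from Kleppner's condition (which by \cite{Om} gives exactly primitivity of $C^*_r$, factoriality of $W^*$, and triviality of the center) to genuine \emph{simplicity} and to the \emph{unique tracial state}: primitivity is strictly weaker than simplicity in general, so one genuinely needs the relative Kleppner condition of \cite{SUT} relative to a suitable subgroup (presumably $P_{n+1}\lhd A_{n+1}$, or an intermediate group), together with the fact that the relevant quotients are $C^*$-simple (Remark~\ref{FC-Z}) and have trivial amenablish radical equal to the center. Verifying that the hypotheses of that relative condition are met — essentially that enough of the group is ``negligible'' or that the central deformation is the only obstruction — is where the real work lies; the cohomology computation and the $\sigma^\varphi$-regularity calculation, while somewhat involved, are essentially bookkeeping with \eqref{1-cocycle}, \eqref{center-action}, and \eqref{semidirect-center}.
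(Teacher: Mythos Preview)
Your overall architecture is right and matches the paper: pass to the twisted group $C^*$-algebra of $A_{n+1}\simeq\F_n\rtimes_\alpha B_n$, use that $A_{n+1}/Z(A_{n+1})$ is icc so Kleppner's condition reduces to $\sigma^\varphi$-regularity of powers of $w=x_1\cdots x_n\,z$, and compute that this regularity is governed by $\mu^\varphi$. That part is essentially Lemma~\ref{braid-centralizer} and Lemma~\ref{kleppner-AB}.

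The gap is in your choice of normal subgroup for the \cite{SUT} machinery. You propose $H=P_{n+1}$; the paper takes $H=\F_n$. This matters. Theorem~\ref{SUT-rkc} requires $C^*_r(H,\sigma')$ to already be simple with unique trace. With $H=\F_n$ the restriction $\sigma^\varphi|_{\F_n}$ is trivial (since $\sigma^\varphi((x,e),(y,e))=\varphi(e,y)=1$), so the needed input is exactly Powers' theorem for $C^*_r(\F_n)$, and the relative Kleppner condition for $(\F_n\rtimes_\alpha B_n,\F_n,\sigma^\varphi)$ is precisely what Lemma~\ref{kleppner-AB} establishes as equivalent to (i). With your $H=P_{n+1}$ you would need $C^*_r(P_{n+1},\sigma^\varphi|_{P_{n+1}})$ simple; but $P_{n+1}$ has nontrivial center, and simplicity of that twisted algebra is essentially Theorem~\ref{P-theorem}, which the paper does \emph{not} fully settle (the implication (iv)~$\Rightarrow$~(i) is open for $n\geq 3$). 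So your route is circular or strictly harder. The finiteness of $A_{n+1}/P_{n+1}$ and the $C^*$-simplicity of the center quotients in Remark~\ref{FC-Z} do not by themselves supply the missing hypothesis of Theorem~\ref{SUT-rkc}.

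Two smaller points. First, $H^1(B_n,\op{Hom}(\F_n,\T))\simeq\T^2$ for $n\geq 3$ (Proposition~\ref{prop:1-cocycles}), so $\varphi$ is \emph{not} determined up to similarity by the single scalar $\mu^\varphi$; rather $\mu^\varphi=(\mu_1\mu_2^{n-2})^{n-1}$ in terms of the two invariants \eqref{mu1}--\eqref{mu2}. This does not affect the theorem, but your cohomology claim is wrong. Second, it is not $w$ itself but a suitable power $w^{k(n-1)}$ that becomes $\sigma^\varphi$-regular when $\mu^\varphi$ is torsion; the extra exponent $(n-1)$ is needed to kill $\varphi(a,(x_1\cdots x_n)^k)$ via \eqref{eq:z-relation}.
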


The \hyperref[proof-AB]{proof} of Theorem~\ref{AB-theorem} is given below in Section~\ref{sec:proofs}.

\begin{definition}\label{P-def}
Let $\varphi\colon P_n \times \F_n \to \T$ be an element of $Z^1(P_n,\op{Hom}(\F_n,\T))$, and let $\alpha$ be as in \eqref{artin-rep}. Define the reduced $\varphi$-deformed version of Artin's representation of pure braid goups as the $C^*$-dynamical system $\alpha^\varphi\colon P_n\to\aut C^*_r(\F_n)$ given by
\[
\alpha^\varphi_{a_{ij}}(\lambda(x_k)) = \varphi(a_{ij},x_k)\lambda(\alpha_{a_{ij}}(x_k)).
\]
The von~Neumann algebra version $\alpha^\varphi\colon P_n\to\aut W^*(\F_n)$ is defined analogously.
\end{definition}
For $1\leq k\leq n$, let $\nu_k^\varphi\in\T$ be the value
\[
\nu^\varphi_k=\prod_{1\leq i<j\leq n}\varphi(a_{ij},x_k).
\]
\begin{theorem}\label{P-theorem}
Let $\alpha$, $\varphi$, $\alpha^\varphi$, and $\nu^\varphi_k$ be as above.
Consider the following conditions:
\begin{itemize}\itemsep2pt
\item[(i)] $\nu^\varphi_k$ is nontorsion for at least one $k$,
\item[(ii)] $C^*_r(\F_n)\rtimes_{\alpha^\varphi}^r P_n$ is simple,
\item[(iii)] $C^*_r(\F_n)\rtimes_{\alpha^\varphi}^r P_n$ has a unique tracial state,
\item[(iv)] $W^*(\F_n)\rtimes_{\alpha^\varphi}P_n$ is a factor.
\end{itemize}
Then (i) $\Longrightarrow$ (ii) $\Longrightarrow$ (iv) and (i) $\Longrightarrow$ (iii) $\Longrightarrow$ (iv).

Moreover, if $n=2$, then (iv) $\Longrightarrow$ (i).
\end{theorem}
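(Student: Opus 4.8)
The plan is to transport the four conditions to the twisted group $C^*$-algebra of $P_{n+1}$. Using $P_{n+1}\simeq\F_n\rtimes_\alpha P_n$, with $\F_n=\langle x_1,\dots,x_n\rangle$ identified with $\langle a_{1,n+1},\dots,a_{n,n+1}\rangle$ and $P_n=\langle a_{ij}:1\le i<j\le n\rangle$, the identifications recalled in Section~\ref{sec:semidirect} give $C^*_r(\F_n)\rtimes_{\alpha^\varphi}^r P_n\simeq C^*_r(P_{n+1},\sigma^\varphi)$ and $W^*(\F_n)\rtimes_{\alpha^\varphi}P_n\simeq W^*(P_{n+1},\sigma^\varphi)$, so that (ii)--(iv) become statements about $(P_{n+1},\sigma^\varphi)$. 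By Remark~\ref{FC-Z} the finite conjugacy classes of $P_{n+1}$ are precisely the singletons $\{\zeta^m\}$, $m\in\Z$, where $\zeta$ generates $Z(P_{n+1})$; by \eqref{semidirect-center} and \eqref{center-action} one has $\zeta=(x_1\cdots x_n,\,z_n)$ in semidirect-product notation, with $z_n\in P_n$ the generator of $Z(P_n)$, and $P_{n+1}/\langle\zeta\rangle$ is $C^*$-simple (Remark~\ref{FC-Z}). Setting $\widetilde\omega_\zeta(g):=\sigma^\varphi(\zeta,g)\overline{\sigma^\varphi(g,\zeta)}$, which is a homomorphism $P_{n+1}\to\T$ because $\zeta$ is central, the element $\zeta^m$ is $\sigma^\varphi$-regular exactly when $\widetilde\omega_{\zeta^m}=\widetilde\omega_\zeta^{\,m}$ is trivial; hence $(P_{n+1},\sigma^\varphi)$ satisfies Kleppner's condition iff $\widetilde\omega_\zeta$ has infinite order, iff some $\widetilde\omega_\zeta(a_{ij})$ is nontorsion.

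The computational heart is to evaluate $\widetilde\omega_\zeta$ on the generators $a_{ij}$. The key group-theoretic fact is that Artin's representation sends each $b\in P_n$ to a \emph{partial conjugation} of $\F_n$, i.e.\ $\alpha_b(x_k)$ is a conjugate of $x_k$ inside $\F_n$ for all $k$ (one checks this on the $a_{ij}=s_{j-1}\cdots s_{i+1}s_i^2s_{i+1}^{-1}\cdots s_{j-1}^{-1}$ and their conjugates). Since each $\varphi(a,\cdot)$ lies in $\op{Hom}(\F_n,\T)$ and $\T$ is abelian, the first identity in \eqref{1-cocycle} then collapses to $\varphi(ab,x_k)=\varphi(a,x_k)\varphi(b,x_k)$ for $a,b\in P_n$, so $b\mapsto\varphi(b,x_k)$ is a character of $P_n$; writing $z_n$ as the product of the $a_{ij}$ from \eqref{braid-center} gives $\varphi(z_n,x_k)=\prod_{1\le i<j\le n}\varphi(a_{ij},x_k)=\nu_k^\varphi$. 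Substituting $g=a_{k,n+1}=(x_k,e)$ and $g=a_{ij}=(e,a_{ij})$ into $\sigma^\varphi((x,a),(y,b))=\varphi(a,y)$ then yields $\widetilde\omega_\zeta(a_{k,n+1})=\nu_k^\varphi$ for $1\le k\le n$ and $\widetilde\omega_\zeta(a_{ij})=\overline{\prod_{k=1}^{n}\varphi(a_{ij},x_k)}$ for $1\le i<j\le n$. Hence (i) makes some $\widetilde\omega_\zeta(a_{k,n+1})=\nu_k^\varphi$ nontorsion, so $(P_{n+1},\sigma^\varphi)$ satisfies Kleppner's condition and (iv) follows from \cite[Theorem~2.7]{Om} (which applies since $P_{n+1}$ is countably infinite). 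Also (ii)$\Rightarrow$(iv), since simplicity implies primitivity, and (iii)$\Rightarrow$(iv), since a unique tracial state forces the centre of $C^*_r(P_{n+1},\sigma^\varphi)$ to be trivial --- else $\tau(z\,\cdot)/\tau(z)$ for a non-scalar $z\ge0$ in the centre would be a second trace --- and one applies \cite[Theorem~2.7]{Om} again.

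To prove (i)$\Rightarrow$(ii) and (i)$\Rightarrow$(iii) I would apply the relative Kleppner condition of \cite{SUT} to the normal subgroup $\F_n\trianglelefteq P_{n+1}$. Here $\sigma^\varphi|_{\F_n}$ is trivial, so $C^*_r(\F_n,\sigma^\varphi|_{\F_n})=C^*_r(\F_n)$ is simple with a unique tracial state; every finite conjugacy class of $P_{n+1}$ is a singleton $\{\zeta^m\}$ (Remark~\ref{FC-Z}), and for $m\ne0$ the element $\zeta^m=((x_1\cdots x_n)^m,\,z_n^m)$ has nontrivial $P_n$-component, hence lies outside $\F_n$, while by the second paragraph condition (i) makes $\zeta^m$ non-$\sigma^\varphi$-regular for every $m\ne0$. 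The relative Kleppner criterion then yields that $C^*_r(P_{n+1},\sigma^\varphi)$ is simple with a unique tracial state. (Alternatively, using $P_{n+1}\simeq(P_{n+1}/Z(P_{n+1}))\times\Z$ from Remark~\ref{rem:ABP}, one can realize $C^*_r(P_{n+1},\sigma^\varphi)$ as a reduced crossed product $C^*_r(P_{n+1}/Z(P_{n+1}),\bar\sigma)\rtimes^r\Z$ of a simple $C^*$-algebra with unique trace --- $P_{n+1}/Z(P_{n+1})$ being $C^*$-simple --- by the gauge automorphism attached to the character $\widetilde\omega_\zeta$, which is nontorsion under (i); then one checks that a gauge automorphism by a nontrivial character of an icc group is properly outer and applies a Kishimoto-type simplicity and trace-uniqueness result.) I expect the delicate point to be matching the FC-structure of $P_{n+1}$ to the exact hypotheses of the relative Kleppner criterion and verifying that non-$\sigma^\varphi$-regularity of the nontrivial central powers is precisely the input it requires.

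Finally, for $n=2$ I would establish (iv)$\Rightarrow$(i). Now $P_3\simeq\F_2\times\Z$ by \eqref{P3}, so $\zeta=z_3=a_{12}a_{13}a_{23}$ generates $Z(P_3)$ and $a_{12},a_{13},a_{23}$ generate $P_3$. From the formulas above, and using $\nu_k^\varphi=\varphi(a_{12},x_k)$ since $P_2=\langle a_{12}\rangle$, we get $\widetilde\omega_\zeta(a_{13})=\nu_1^\varphi$, $\widetilde\omega_\zeta(a_{23})=\nu_2^\varphi$ and $\widetilde\omega_\zeta(a_{12})=\overline{\nu_1^\varphi\nu_2^\varphi}$, so $\widetilde\omega_\zeta(P_3)=\langle\nu_1^\varphi,\nu_2^\varphi\rangle$; this subgroup of $\T$ is nontorsion iff $\nu_1^\varphi$ or $\nu_2^\varphi$ is nontorsion, i.e.\ iff (i) holds. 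Since (iv) is equivalent by \cite[Theorem~2.7]{Om} to Kleppner's condition for $(P_3,\sigma^\varphi)$, which by the first paragraph is equivalent to $\widetilde\omega_\zeta$ having infinite order, we conclude (iv)$\Rightarrow$(i).
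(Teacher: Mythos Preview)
Your overall strategy is the same as the paper's: identify the crossed product with $C^*_r(P_{n+1},\sigma^\varphi)$, reduce everything to Kleppner-type conditions on the central elements $\zeta^m$, and invoke Theorem~\ref{SUT-rkc} via the relative Kleppner condition for $\F_n\trianglelefteq P_{n+1}$. Your computation $\varphi(z_n,x_k)=\nu^\varphi_k$ is exactly the paper's identification (cf.\ Lemma~\ref{kleppner-P}(i)), and the $n=2$ case matches Example~\ref{ex2}.

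There is, however, a genuine gap in your verification of the relative Kleppner condition. That condition concerns the $\F_n$-conjugacy class $C_{\F_n}(g)=\{xgx^{-1}:x\in\F_n\}$, not the full $P_{n+1}$-conjugacy class. You appeal to Remark~\ref{FC-Z} to conclude that the only elements with finite conjugacy class are the central powers $\zeta^m$; but Remark~\ref{FC-Z} controls $C_{P_{n+1}}(g)$, and from $C_{\F_n}(g)\subseteq C_{P_{n+1}}(g)$ one cannot deduce that finiteness of $C_{\F_n}(g)$ forces finiteness of $C_{P_{n+1}}(g)$. A priori there could be $g\notin\F_n$ with $C_{\F_n}(g)$ finite yet $C_{P_{n+1}}(g)$ infinite, and for such $g$ your argument says nothing about $\sigma^\varphi$-regularity with respect to $\F_n$. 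The paper closes this gap with Lemma~\ref{braid-centralizer}, which shows directly that $C_{\F_n}(g)$ finite already forces $g\in Z(\F_n\rtimes_\alpha B_n)$: one first uses that $\F_n$ is icc to get $C_{\F_n}(g)=\{g\}$, hence $g$ centralizes $\F_n$, and then the explicit form of Artin's representation together with the fact that the centralizer of $x_1\cdots x_n$ in $\F_n$ is cyclic pins down $g=(x_1\cdots x_n)^k z^k$. Once you insert this lemma, your argument for (i)$\Rightarrow$(ii),(iii) goes through exactly as in the paper.
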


The \hyperref[proof-P]{proof} of Theorem~\ref{P-theorem} is given below in Section~\ref{sec:proofs}.

\begin{example}\label{ex1}
Let us consider the case $n=2$, where $B_2\simeq\Z$ is generated by $s=s_1$ and $P_2\simeq\Z$ is generated by $a=a_{12}=s^2$. First, for the braid group we have
\[
\begin{gathered}
\alpha^\varphi_s(\lambda(x_1)) = \varphi(s,x_1)\lambda(x_2), \\
\alpha^\varphi_s(\lambda(x_2)) = \varphi(s,x_2)\lambda(x_2^{-1}x_1^{\vphantom{-1}}x_2^{\vphantom{-1}}).
\end{gathered}
\]
Moreover, $\varphi_1\sim \varphi_2$ if and only if $\varphi_1(s,x_1)\varphi_1(s,x_2)=\mu^{\varphi_1}=\mu^{\varphi_2}=\varphi_2(s,x_1)\varphi_2(s,x_2)$, as explained in Proposition~\ref{prop:1-cocycles} below.
For the pure braid group, we have
\[
\begin{gathered}
\alpha^\varphi_a(\lambda(x_1)) = \varphi(a,x_1)\lambda(x_2^{-1}x_1^{\vphantom{-1}}x_2^{\vphantom{-1}}), \\
\alpha^\varphi_a(\lambda(x_2)) = \varphi(a,x_2)\lambda(x_2^{-1}x_1^{-1}x_2^{\vphantom{-1}}x_1^{\vphantom{-1}}x_2^{\vphantom{-1}}),
\end{gathered}
\]
In particular, we notice that $\alpha_a=\op{Ad}x_2^{-1}x_1^{-1}$ in this case.
See Example~\ref{ex2} for more on this.
\end{example}

\begin{question}
In Theorem~\ref{P-theorem}, does (iv) $\Longrightarrow$ (i), (ii), or (iii) for $n\geq 3$?
\end{question}

\begin{definition}
Let $\alpha$ be as in \eqref{artin-rep}.
First, for $\varphi\in Z^1(B_n,\op{Hom}(\F_n,\T))$, the full $\varphi$-deformed version of Artin's representation of braid groups is the map $\alpha^\varphi\colon B_n\to\aut C^*(\F_n)$, defined on generators similarly as in Definition~\ref{AB-def}.

Secondly, for $\varphi\in Z^1(P_n,\op{Hom}(\F_n,\T))$, the full $\varphi$-deformed version of Artin's representation of pure braid groups is the map $\alpha^\varphi\colon P_n\to\aut C^*(\F_n)$, defined on generators similarly as in Definition~\ref{P-def}.
\end{definition}

\begin{theorem}\label{primitivity}
The full crossed product $C^*(\F_2)\rtimes_{\alpha^\varphi} B_2$ is primitive if and only if it has trivial center, if and only if $\mu^\varphi=\varphi(s,x_1)\varphi(s,x_2)$ is nontorsion.

The full crossed product $C^*(\F_2)\rtimes_{\alpha^\varphi} P_2$ is primitive if and only if it has trivial center, if and only if at least one of $\nu^\varphi_1=\varphi(a,x_1)$ and $\nu^\varphi_2=\varphi(a,x_2)$ is nontorsion.
\end{theorem}

\begin{proof}
Assume $\mu=\mu^\varphi$ is nontorsion. We will first show that $C^*(\F_2)\rtimes_{\alpha^\varphi} B_2$ is primitive. According to work of Choi \cite{Choi}, there exists a faithful irreducible representation $\pi$ of $C^*(\F_2)$ on some separable Hilbert space $\mathcal{H}$. Let $U_1$ and $U_2$ be the two generators of $C^*(\F_2)$ and set $V_1=\pi(U_1)$ and $V_2=\pi(U_2)$. We may assume that $V_2$ is diagonal with distinct diagonal entries (see \cite[Proof of Theorem~6]{Choi}). For every $z\in\T$ define an automorphism $\gamma_z$ on $C^*(\F_2)$ by $\gamma_z(U_1)=U_1$ and $\gamma_z(U_2)=zU_2$, and set $\pi_z=\pi\circ\gamma_z$. Clearly, $\pi_z$ is also faithful and irreducible for every choice of $z\in\T$, and we would like to show that $\pi_z\circ\alpha^\varphi_{s^k}\not\simeq\pi_z$ for every $k\in\Z\setminus\{0\}$, and then apply \cite[Theorem~2.1]{BO-pams}. We compute that
\[
\pi_z\circ\alpha^\varphi_{s^{2k}}(U_i)=z\mu^k\pi(U_1U_2)^{-k}\pi(U_i)\pi(U_1U_2)^k\simeq z\mu^k\pi(U_i)
\]
for $i=1,2$ all $k\in\Z$, and moreover that
\[
\pi_z\circ\alpha^\varphi_{s^{2k+1}}(U_1)=\pi_z\circ\alpha^\varphi_{s^{2k}}(\varphi(s,x_1)U_2)\simeq \varphi(s,x_1)z\mu^k\pi(U_2)
\]
for all $k\in\Z$. Applying the same strategy as in \cite[proof of Theorem~2.3]{BO-pams}, using that the point spectrum of $V_2$ is countable, it is not hard to see that
\[
\Omega:=\bigcup_{k\in\Z} \left(\{z:V_2\not\simeq z\mu^k V_2\} \cup \{z:V_1\not\simeq \varphi(s,x_1)z\mu^k V_2\}\right)
\]
is a countable set. Thus, by choosing $z\in\T\setminus\Omega$, we get that $\pi_z\circ\alpha^\varphi_{s^k}\not\simeq\pi_z$ for every $k\in\Z\setminus\{0\}$. 

Hence, it follows from \cite[Theorem~2.1]{BO-pams} that $C^*(\F_2)\rtimes_{\alpha^\varphi} B_2$ is primitive.

All the other implications follow from known results after identifying the crossed products with twisted group $C^*$-algebras as explained in Example~\ref{ex2} below.
\end{proof}

\begin{question}
For $n\geq 3$, does condition~(i) of Theorem~\ref{AB-theorem} and~\ref{P-theorem} imply primitivity of $C^*(\F_n)\rtimes_{\alpha^\varphi} B_n$ and $C^*(\F_n)\rtimes_{\alpha^\varphi} P_n$, respectively?
\end{question}


Before we prove the above results in the next section, we compute the cohomology groups:

\begin{proposition}\label{prop:1-cocycles}
We have
\[
H^1(B_n,\op{Hom}(\F_n,\T))\simeq \begin{cases}
\T & \text{for $n=2$}, \\
\T^2 & \text{for $n\geq 3$},
\end{cases}
\]
where the cohomology class $[\varphi]$ of $\varphi\in Z^1(B_n,\op{Hom}(\F_n,\T))$ is determined by the parameters \eqref{mu1} and \eqref{mu2} below.

Moreover, for all $n\geq 2$, we have
\[
H^1(P_n,\op{Hom}(\F_n,\T)) = Z^1(P_n,\op{Hom}(\F_n,\T)) = \op{Hom}(P_n,\op{Hom}(\F_n,\T))\simeq\T^{\frac{1}{2}n(n-1)\cdot n}.
\]
\end{proposition}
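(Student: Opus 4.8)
The plan is to treat the braid-group and pure-braid-group cases separately, since the key structural difference is that $P_n$ acts on $\F_n$ by inner automorphisms (being built from the conjugating generators $a_{ij}$, each of which acts as conjugation by a word in the $x_k$), whereas $B_n$ genuinely permutes generators.

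\emph{The pure braid case.} Here I would first observe that the commutator subgroup structure forces every $1$-cocycle to be a homomorphism. Concretely, since $\alpha_{a_{ij}}$ is an inner automorphism of $\F_n$, and since a character $\varphi(a,\cdot)\in\op{Hom}(\F_n,\T)$ is trivial on commutators, the cocycle identity $\varphi(ab,x)=\varphi(a,\alpha_b(x))\varphi(b,x)$ collapses to $\varphi(ab,x)=\varphi(a,x)\varphi(b,x)$; that is, $a\mapsto\varphi(a,\cdot)$ is a homomorphism $P_n\to\op{Hom}(\F_n,\T)$, so $Z^1=\op{Hom}(P_n,\op{Hom}(\F_n,\T))$. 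Next I would show every coboundary is trivial: if $h(a,x)=f(\alpha_a(x))\overline{f(x)}$ with $f\in\op{Hom}(\F_n,\T)$, then since $\alpha_a(x)$ differs from $x$ by a product of commutators and $f$ kills commutators, $h\equiv 1$, so $B^1$ is trivial and $H^1=Z^1$. Finally I would compute the group $\op{Hom}(P_n,\op{Hom}(\F_n,\T))$: since $\op{Hom}(\F_n,\T)\simeq\T^n$ is divisible hence injective as a $\Z$-module, $\op{Hom}(P_n,\T^n)\simeq\op{Hom}(P_n^{\mathrm{ab}},\T^n)$, and $P_n^{\mathrm{ab}}\simeq\Z^{\frac12 n(n-1)}$ (free abelian on the images of the $a_{ij}$, $1\le i<j\le n$), giving $\op{Hom}(P_n,\op{Hom}(\F_n,\T))\simeq(\T^n)^{\frac12 n(n-1)}\simeq\T^{\frac12 n(n-1)\cdot n}$, as claimed.

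\emph{The braid case.} This is the part I expect to be the main obstacle, because $B_n$ acts nontrivially and $B^1$ need not be trivial. I would parametrize a cocycle $\varphi$ by the values $\varphi(s_j,x_k)$ for $1\le j\le n-1$, $1\le k\le n$ — a priori $(n-1)n$ parameters in $\T$ — and then grind out the constraints imposed by the braid relations $s_is_{i+1}s_i=s_{i+1}s_is_{i+1}$ and $s_is_j=s_js_i$ for $|i-j|\ge 2$ via the cocycle identity \eqref{1-cocycle}, using that $\varphi(s_j,\cdot)$ is a character and that Artin's action \eqref{artin-rep} satisfies $\alpha(s_i)(x_1\cdots x_n)=x_1\cdots x_n$. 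These relations should cut the parameter space down and, crucially, identify which combination(s) survive modulo $B^1$; I expect the surviving invariant to be $\mu^\varphi=\prod_{j,k}\varphi(s_j,x_k)$ (the product over all generators, which is what the action's fixing of $x_1\cdots x_n$ protects from being a coboundary) for $n=2$, with one further independent parameter appearing for $n\ge 3$ because the braid relation among three consecutive strands is then present. Computing $B^1$ explicitly: a coboundary is $h(s_j,x_k)=f(\alpha_{s_j}(x_k))\overline{f(x_k)}$ for $f\in\op{Hom}(\F_n,\T)\simeq\T^n$; writing $f(x_k)=t_k$, one reads off $h(s_j,x_j)=t_{j+1}\overline{t_j}$, $h(s_j,x_{j+1})=t_j\overline{t_{j+1}}$, $h(s_j,x_k)=1$ otherwise, so $B^1\simeq\T^n/\T$ (the kernel being the diagonal, since scaling all $t_k$ by a common phase gives the trivial coboundary), of dimension $n-1$. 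Then $H^1\simeq Z^1/B^1$, and I would match dimensions: for $n=2$, $Z^1$ is cut down to a $2$-torus by the (vacuous) relations with $B^1\simeq\T$, leaving $H^1\simeq\T$; for $n\ge 3$, after imposing the braid relations $Z^1$ should have dimension $n+1$, and quotienting by the $(n-1)$-dimensional $B^1$ leaves $H^1\simeq\T^2$. I would exhibit the two surviving coordinates explicitly — one being $\mu^\varphi$ — matching \eqref{mu1} and \eqref{mu2}.

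The genuinely delicate point is the braid-case computation of $Z^1(B_n,\op{Hom}(\F_n,\T))$: one must carefully expand both sides of the braid relation through the cocycle identity, keeping track of how the characters $\varphi(s_j,\cdot)$ compose with the (non-inner) automorphisms $\alpha_{s_j}$, and verify that the resulting system of multiplicative equations in the $\varphi(s_j,x_k)$ has exactly an $(n+1)$-dimensional (for $n\ge3$) or $2$-dimensional (for $n=2$) solution torus. I would organize this by first using the far-commutation relations $s_is_j=s_js_i$ to decouple non-adjacent indices, then focusing on the three-strand subgroup $\langle s_i,s_{i+1}\rangle$ where the only real content lies, and finally assembling the local constraints into the global count. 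Throughout I would lean on the injectivity of $\T$ as a $\Z$-module to reduce all the $\op{Hom}(-,\T)$ computations to linear algebra over $\Z$ on abelianizations.
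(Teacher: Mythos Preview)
Your approach is essentially the same as the paper's. The pure-braid case is argued exactly as in the paper (the action of $P_n$ on $\op{Hom}(\F_n,\T)$ is trivial because each $\alpha_{a_{ij}}$ is inner, so $Z^1=\op{Hom}$ and $B^1=1$), and your braid-case plan---parametrize by the $\varphi(s_j,x_k)$, impose the braid relations through the cocycle identity, compute $B^1$ explicitly from $f\in\op{Hom}(\F_n,\T)$, and take the quotient---is precisely what the paper does; your description of $B^1$ and the dimension count $\dim Z^1=n+1$, $\dim B^1=n-1$ for $n\ge 3$ are both correct.

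One small correction: for $n\ge 3$ the invariant $\mu^\varphi=\prod_{j,k}\varphi(s_j,x_k)$ is \emph{not} itself one of the two $H^1$-coordinates. When you carry out the computation you will find (as the paper does) that the actual coordinates are $\mu_1=\varphi(s_i,x_i)\varphi(s_i,x_{i+1})$ (independent of $i$) and $\mu_2=\varphi(s_i,x_j)$ for $j\notin\{i,i+1\}$ (also independent of $i,j$); these are the parameters \eqref{mu1} and \eqref{mu2}. One then has $\mu^\varphi=(\mu_1\mu_2^{\,n-2})^{n-1}$, and since $\gcd(n-1,(n-1)(n-2))=n-1>1$, the character $(\mu_1,\mu_2)\mapsto\mu^\varphi$ on $\T^2$ is not primitive and cannot serve as a coordinate. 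This does not affect the structure of your argument, only the specific labels you attach to the surviving invariants.
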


\begin{proof}
Let $n\geq 2$ and pick $\varphi\in Z^1(B_n,\op{Hom}(\F_n,\T))$.
Computations using \eqref{1-cocycle} and the fact that $\varphi(s_is_{i+1}s_i,x_j)=\varphi(s_{i+1}s_is_{i+1},x_j)$ for all $1\leq i\leq n-2$ and $1\leq j\leq n$ show the following:
By taking $j=i$ or $i+2$ we get that
\begin{equation}\label{cocycle-relation-1}
\varphi(s_{i+1},x_i)=\varphi(s_i,x_{i+2}) \text{ for all $1\leq i\leq n-2$}.
\end{equation}
By letting $j=i+1$ and using \eqref{cocycle-relation-1} we get that
\begin{equation}\label{cocycle-relation-2}
\varphi(s_i,x_i)\varphi(s_i,x_{i+1})=\varphi(s_{i+1},x_{i+1})\varphi(s_{i+1},x_{i+2}) \text{ for all $1\leq i\leq n-2$},
\end{equation}
that is, $\varphi(s_i,x_i)\varphi(s_i,x_{i+1})$ takes the same value for all $1\leq i\leq n-1$.
Finally, we get that
\begin{equation}\label{cocycle-relation-3}
\varphi(s_i,x_j)=\varphi(s_{i+1},x_j) \text{ for all $1\leq i\leq n-2$ and all $j\notin\{i,i+1,i+2\}$}.
\end{equation}
If $n\geq 3$, then computations using \eqref{1-cocycle} and the fact that $\varphi(s_is_j,x_k)=\varphi(s_js_i,x_k)$ whenever $\lvert i-j\rvert\geq 2$ and $1\leq k\leq n$ show that
\begin{equation}\label{cocycle-relation-4}
\varphi(s_i,x_k)=\varphi(s_i,x_\ell) \text{ for all $1\leq i\leq n-2$ and all $k,\ell\notin\{i,i+1\}$}.
\end{equation}
By combining \eqref{cocycle-relation-1}, \eqref{cocycle-relation-2}, \eqref{cocycle-relation-3}, and \eqref{cocycle-relation-4}, we see that there is $\mu_1\in\T$ such that
\begin{equation}\label{mu1}
\mu_1=\varphi(s_i,x_i)\varphi(s_i,x_{i+1}) \text{ for all $1\leq i\leq n-1$},
\end{equation}
and if $n\geq 3$, there is $\mu_2\in\T$ such that
\begin{equation}\label{mu2}
\mu_2=\varphi(s_i,x_j) \text{ for all $1\leq i\leq n-1$ and all $j\notin\{i,i+1\}$}.
\end{equation}
These two values, together with a choice of $\varphi(s_i,x_i)$ or $\varphi(s_i,x_{i+1})$ for every $i$, determine $\varphi$.
Further computations show that $B^1(B_n,\op{Hom}(\F_n,\T))$ consists of all functions $h$ satisfying
\[
h(s_i,x_i)h(s_i,x_{i+1})=h(s_i,x_j)=1 \text{ for all $1\leq i\leq n-1$ and all $j\notin\{i,i+1\}$}.
\]
Indeed, if $h(a,x)=f(\alpha_a(x))\overline{f(x)}$ for some $f\in\op{Hom}(\F_n,\T)$, then
\[
h(s_i,x_{i+1})=f(x_i)\overline{f(x_{i+1})}=\overline{h(s_i,x_i)} \text{ for all $1\leq i\leq n-1$},
\]
and $h(s_i,x_j)=f(x_i)\overline{f(x_i)}=1$ if $j\notin\{i,i+1\}$.
Therefore, for $n\geq 3$ the values $\mu_1,\mu_2\in\T$ determine the class of $\varphi$ in $H^1(B_n,\op{Hom}(\F_n,\T))$, and two distinct such choices give rise to nonsimilar $1$-cocycles. For $n=2$, the class of $\varphi$ is determined by the single parameter $\mu_1$.

The last statement follows from a fairly straightforward calculation, showing that for all $a\in P_n$, there is some $x\in\F_n$ such that $\alpha(a)=\op{Ad}x$.
That is, the action $\alpha$ of $P_n$ on $\op{Hom}(\F_n,\T)$ is trivial.
\end{proof}

\begin{lemma}
Let $\varphi\in Z^1(B_n,\op{Hom}(\F_n,\T))$ and $z$ is the generator of $Z(B_n)$ defined in \eqref{braid-center}. For every $1\leq i\leq n$ and $1\leq j\leq n-1$, we have
\begin{equation}\label{eq:z-relation}
\mu^\varphi=\varphi(z,x_i)=\varphi(s_j,x_1\dotsm x_n)^{n-1}.
\end{equation}
Moreover, we note that $z$ acts trivially on $\op{Hom}(\F_n,\T)$, so that
\begin{equation}\label{varphi-z}
\varphi(za,x)=\varphi(z,x)\varphi(a,x) \text{ and } \varphi(z,x)=\varphi(z,\alpha_a(x)) \text{ for all $a\in B_n$ and $x\in\F_n$}.
\end{equation}
\end{lemma}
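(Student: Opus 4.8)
The plan is to first settle \eqref{varphi-z}, and then to compute $\varphi(z,x_i)$ by writing $z$ as a power of $w:=s_1\dotsm s_{n-1}$ and using that $w$ cyclically permutes the generators of $\F_n$ modulo commutators, while fixing $y:=x_1\dotsm x_n$ on the nose.

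For \eqref{varphi-z}, recall from \eqref{center-action} that $\alpha(z)$ is the inner automorphism $\op{Ad}\bigl((x_1\dotsm x_n)^{-1}\bigr)$ of $\F_n$. Any inner automorphism of $\F_n$ acts trivially on $\op{Hom}(\F_n,\T)$, because $\T$ is abelian: if $f\in\op{Hom}(\F_n,\T)$ and $\alpha(z)=\op{Ad}v$, then $f\bigl(\alpha(z)(x)\bigr)=f(v)f(x)f(v)^{-1}=f(x)$. Applying the first identity of \eqref{1-cocycle} to the pair $(a,z)$ gives $\varphi(az,x)=\varphi\bigl(a,\alpha(z)(x)\bigr)\varphi(z,x)=\varphi(a,x)\varphi(z,x)$, using the triviality just noted; applying it to the pair $(z,a)$ gives $\varphi(za,x)=\varphi\bigl(z,\alpha(a)(x)\bigr)\varphi(a,x)$. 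Since $az=za$ in $B_n$, comparing the two yields $\varphi\bigl(z,\alpha(a)(x)\bigr)=\varphi(z,x)$, and then the first equation of \eqref{varphi-z} follows as well. In particular, taking $a=s_i$ and $x=x_i$ and using $\alpha(s_i)(x_i)=x_{i+1}$ shows $\varphi(z,x_i)$ is independent of $i$.

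For \eqref{eq:z-relation}, write $z=w^n$ as in \eqref{braid-center}. Iterating the first identity of \eqref{1-cocycle} gives $\varphi(w^n,x)=\prod_{m=0}^{n-1}\varphi\bigl(w,\alpha(w^m)(x)\bigr)$. Now $\varphi(w,\cdot)$ is a homomorphism into the abelian group $\T$, so it factors through the abelianization $\F_n^{\mathrm{ab}}\simeq\Z^n$, on which $\alpha(w)$ acts by the permutation matrix of the image of $w$ in $S_n$ — one checks this image is the $n$-cycle $\pi=(1\,2\,\dots\,n)$. Hence $\varphi\bigl(w,\alpha(w^m)(x_i)\bigr)=\varphi\bigl(w,x_{\pi^m(i)}\bigr)$, and as $m$ ranges over $0,\dots,n-1$ the index $\pi^m(i)$ runs through all of $\{1,\dots,n\}$, so $\varphi(z,x_i)=\prod_{k=1}^n\varphi(w,x_k)=\varphi(w,y)$, where $y=x_1\dotsm x_n$. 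Next, each $\alpha(s_j)$ fixes $y$ (noted after \eqref{artin-rep}), hence so does any composite of them; iterating \eqref{1-cocycle} along the word $w=s_1\dotsm s_{n-1}$ and dropping the (now trivial) twists gives $\varphi(w,y)=\prod_{j=1}^{n-1}\varphi(s_j,y)$, and $\varphi(s_j,y)=\prod_{k=1}^n\varphi(s_j,x_k)$ since $\varphi(s_j,\cdot)$ is a homomorphism. Combining these yields $\varphi(z,x_i)=\prod_{j,k}\varphi(s_j,x_k)=\mu^\varphi$.

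For the last equality in \eqref{eq:z-relation} it remains to see $\varphi(s_j,y)$ is independent of $j$: applying \eqref{1-cocycle} to the braid relation $s_js_{j+1}s_j=s_{j+1}s_js_{j+1}$ evaluated at $y$, and using again that $\alpha(s_j)$ and $\alpha(s_{j+1})$ fix $y$, collapses it to $\varphi(s_j,y)^2\varphi(s_{j+1},y)=\varphi(s_{j+1},y)^2\varphi(s_j,y)$, whence $\varphi(s_j,y)=\varphi(s_{j+1},y)$; thus $\varphi(z,x_i)=\prod_{j=1}^{n-1}\varphi(s_j,y)=\varphi(s_j,y)^{\,n-1}=\varphi(s_j,x_1\dotsm x_n)^{n-1}$. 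The main obstacle is the bookkeeping in the third paragraph — verifying that $\alpha(w)$ induces a genuine $n$-cycle on $\F_n^{\mathrm{ab}}$, so that the orbit $\{\pi^m(i)\}$ really exhausts all generators, while keeping track of the fact that $\varphi(w,\cdot)$ only sees $\alpha(w^m)(x_i)$ up to commutators.
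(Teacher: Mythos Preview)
Your proof is correct. Both arguments iterate the cocycle identity \eqref{1-cocycle} along the word $z=(s_1\dotsm s_{n-1})^n$, but the paper routes the computation through the explicit parametrization $\mu_1,\mu_2$ established in Proposition~\ref{prop:1-cocycles} (relations \eqref{cocycle-relation-1}--\eqref{cocycle-relation-4}), first showing $\varphi(s_i,x_1\dotsm x_n)=\mu_1\mu_2^{n-2}$ and then invoking unspecified ``tedious calculations'' for $\varphi(z,x_i)$. Your approach is more self-contained and conceptual: you bypass Proposition~\ref{prop:1-cocycles} entirely by observing that $\varphi(w,\cdot)$ factors through $\F_n^{\mathrm{ab}}$, on which $\alpha(w)$ acts as an $n$-cycle, so the product $\prod_m\varphi(w,\alpha(w^m)(x_i))$ automatically sweeps through all generators. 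Likewise, your derivation of the $j$-independence of $\varphi(s_j,y)$ directly from the braid relation evaluated at the fixed point $y$ is cleaner than assembling it from the componentwise relations \eqref{cocycle-relation-1}--\eqref{cocycle-relation-3}. The paper's approach has the minor advantage of yielding the explicit value $(\mu_1\mu_2^{n-2})^{n-1}$ in terms of the parametrization already set up, whereas yours gives a proof that would stand alone without that proposition.
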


\begin{proof}
First, by combining \eqref{cocycle-relation-1} and \eqref{cocycle-relation-2} we have that
\[
\varphi(s_i,x_ix_{i+1}x_{i+2})=\varphi(s_{i+1},x_ix_{i+1}x_{i+2}) \text{ for all $1\leq i\leq n-2$}.
\]
Then, by using \eqref{cocycle-relation-3} we see that $\varphi(s_i,x_1\dotsm x_n)$ takes the same value for all $1\leq i\leq n-1$, namely the value $\mu_1^{\vphantom{n-2}}\mu_2^{n-2}$, where $\mu_1,\mu_2$ are the values described in the proof of Proposition~\ref{prop:1-cocycles}.

Calculations using \eqref{1-cocycle} and \eqref{product-action} show that
\[
\varphi((s_1,\dotsc,s_{n-1})^n,x_i)=\varphi(s_1,\dotsc,s_{n-1},x_1\dotsm x_n),
\]
for all $i$, so by \eqref{braid-center} and the fact that all $s_i$ act trivially on $x_1\dotsm x_n$, we get that
\[
\varphi(z,x_i)=\prod_{\substack{1\leq j\leq n-1\\1\leq k\leq n}}\varphi(s_j,x_k)=(\mu_1^{\vphantom{n-2}}\mu_2^{n-2})^{n-1},
\]
and hence \eqref{eq:z-relation} holds. Showing \eqref{varphi-z} is straightforward from \eqref{1-cocycle} and \eqref{center-action}.
\end{proof}

\section{Proofs via twisted group \texorpdfstring{$C^*$}{C*}-algebras}\label{sec:proofs}

Let $H$ be a normal subgroup of a discrete group $G$, and let $\sigma$ be a $2$-cocycle on $G$. Following \cite[Definition~4.5]{SUT}, we say that $g\in G$ is $\sigma$-regular with respect to $H$ if $\sigma(g,x)=\sigma(x,g)$ whenever $x\in H$ and $gx=xg$. Moreover, $(G,H,\sigma)$ satisfies \emph{the relative Kleppner condition} if $C_H(g)=\{xgx^{-1}:x\in H\}$ is infinite whenever $g\in G\setminus H$ is $\sigma$-regular with respect to $H$.

We refer to \cite[Section~4]{SUT} for a brief discussion on the connection between the relative Kleppner condition and freely acting (or equivalently, properly outer) automorphisms of the twisted group von~Neumann algebra. In this paper we will repeatedly apply the following:

\begin{proposition}[{\cite[Corollary~4.11]{SUT}}]\label{SUT-rkc}
Let $G$ be a discrete group with a normal subgroup $H$. Let $\sigma$ be a $2$-cocycle on $G$, and denote its restriction to $H$ by $\sigma'$.
Assume moreover that $(G,H,\sigma)$ satisfies the relative Kleppner condition.

If $C^*_r(H,\sigma')$ is simple (resp.\ has a unique tracial state), then $C^*_r(G,\sigma)$ is simple (resp.\ has a unique tracial state).
\end{proposition}

To prove Theorem~\ref{AB-theorem} and~\ref{P-theorem} we will identify the crossed products with reduced twisted group $C^*$-algebras as explained in Section~\ref{sec:semidirect}. We first need the following result:

\begin{lemma}\label{braid-centralizer}
Suppose that $g\in \F_n\rtimes_\alpha B_n$.
Then $C_{\F_n}(g)$ is finite if and only $g\in Z(\F_n\rtimes_\alpha B_n)$.
\end{lemma}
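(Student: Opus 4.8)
The plan is to analyze the conjugation action of $\F_n$ on an arbitrary element $g = (x, a) \in \F_n \rtimes_\alpha B_n$, where $x \in \F_n$ and $a \in B_n$. For $y \in \F_n$, we have $y g y^{-1} = (y \, \alpha_a(y)^{-1})x\, , a)$ after a short computation using the semidirect product law and $\alpha_a(y^{-1}) = \alpha_a(y)^{-1}$; more precisely the $\F_n$-component of $ygy^{-1}$ is $y x \alpha_a(y^{-1})$. So $C_{\F_n}(g)$ is finite if and only if the set $\{ y x \alpha_a(y)^{-1} : y \in \F_n \}$ is finite. The first key step is therefore to reduce the question to understanding when the ``twisted orbit map'' $y \mapsto y x \alpha_a(y)^{-1}$ has finite image.

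Next I would split into cases according to the image $\bar a \in S_n$ of $a$ under $B_n \to S_n$. If $\bar a \neq e$, then $\alpha_a$ permutes the generators $x_1, \dots, x_n$ nontrivially (up to conjugation and inversion), and one can exhibit an explicit infinite family: taking $y = x_k^m$ for a generator $x_k$ moved by the permutation, the elements $y x \alpha_a(y)^{-1}$ are pairwise distinct for distinct $m$, by a length/normal-form argument in the free group (the leading $x_k^m$ cannot be cancelled since $\alpha_a(x_k)$ is a conjugate of a different generator, or its inverse). Hence $C_{\F_n}(g)$ is infinite, and we must separately check $g \notin Z(\F_n \rtimes_\alpha B_n)$: this is immediate from \eqref{semidirect-center}, since the central elements are the powers of $x_1 \cdots x_n \, z$ and $z$ lies in $P_n$, so their $B_n$-component maps to the identity in $S_n$ — contradicting $\bar a \neq e$. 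If instead $\bar a = e$, i.e.\ $a \in P_n$, then by the last statement in the proof of Proposition~\ref{prop:1-cocycles} we have $\alpha_a = \op{Ad} w$ for some $w = w(a) \in \F_n$, so $y x \alpha_a(y^{-1}) = y x w y^{-1} w^{-1} = (y)(xw)(y^{-1})w^{-1}$, and finiteness of this orbit is equivalent to $xw$ having finite conjugacy class in $\F_n$. But $\F_n$ is icc for $n \geq 2$, so this forces $xw = e$, i.e.\ $x = w^{-1} = w(a)^{-1}$; and then one checks that $g = (x,a)$ with $x$ determined this way is exactly a central element, using \eqref{center-action}, \eqref{semidirect-center} and the fact (from Remark~\ref{rem:ABP}) that $P_n \simeq (P_n/Z(P_n)) \times Z(P_n)$ — only the scalar multiples of the center's generator survive the constraint $\op{Ad} w(a)$ being implemented by a fixed free-group element compatibly across $P_n$. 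For $n = 2$ one handles this directly using Example~\ref{ex1}, where $\alpha_{a} = \op{Ad}(x_2^{-1}x_1^{-1})$, so $x = x_1 x_2$ and $g = x_1 x_2 \, a$, which is indeed the central generator of $P_3 \simeq \F_2 \rtimes \Z$ by \eqref{semidirect-center}.

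The reverse implication — that $g \in Z(\F_n \rtimes_\alpha B_n)$ forces $C_{\F_n}(g)$ finite — is trivial: a central element is fixed by every conjugation, so $C_{\F_n}(g) = \{g\}$. The main obstacle is the careful bookkeeping in the case $a \in P_n \setminus Z$: one needs to rule out the possibility that $xw(a)$ is trivial for some $a$ that is not central, which amounts to showing that $a \mapsto w(a)^{-1}$ (the ``would-be'' $\F_n$-component) fails to be a homomorphism-compatible section over all of $P_n$ unless $a$ is central — equivalently, that the only elements of $\F_n \rtimes_\alpha P_n$ with trivial image in $(\F_n \rtimes_\alpha P_n)/Z$ are the central ones, which is again just the icc property of the quotient recorded in Remark~\ref{FC-Z}. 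So in fact the whole argument can be streamlined: $C_{\F_n}(g)$ finite $\iff$ $g$ has finite $\F_n$-conjugacy class $\iff$ (since the relevant quotients are icc) $g$ is central. The delicate point is verifying that finiteness of the $\F_n$-conjugacy class of $g$ already implies finiteness of the full conjugacy class of $g$ in $\F_n \rtimes_\alpha B_n$, which uses that $B_n/P_n = S_n$ is finite together with the case analysis on $\bar a$ above.
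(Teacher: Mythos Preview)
Your argument reaches the correct intermediate point---that if $C_{\F_n}(g)$ is finite then $g$ commutes with every element of $\F_n$---but the passage from ``$g$ centralizes $\F_n$'' to ``$g\in Z(\F_n\rtimes_\alpha B_n)$'' is not established. In the case $a\in P_n$ you correctly obtain $\alpha_a=\op{Ad} w$ and $x=w^{-1}$, so $g=(w^{-1},a)$ commutes with $\F_n$; but to conclude that $g$ is central you still need $a\in Z(B_n)$ and $w$ fixed by every $\alpha_b$. Your justification (``only the scalar multiples of the center's generator survive\dots compatibly across $P_n$'') is not an argument, and the appeal to $P_n\simeq (P_n/Z(P_n))\times Z(P_n)$ does not by itself force $a$ to lie in $Z(P_n)$. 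The alternative route via Remark~\ref{FC-Z} also has a gap: that remark concerns \emph{full} conjugacy classes, whereas you only know the $\F_n$-conjugacy class is finite. Your proposed bridge (``$B_n/P_n=S_n$ is finite together with the case analysis'') does not work, because once you reduce to $a\in P_n$ you still have $C_{A_{n+1}}(g)=\{bgb^{-1}:b\in B_n\}$, and $B_n$ (and $P_n$) are infinite---nothing you have written bounds this set.

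The paper closes this gap with three concrete ingredients you never invoke: (1) the element $x_1\dotsm x_n$ is fixed by every $\alpha(b)$, so from $\alpha(a)(x_i)=w x_i w^{-1}$ one gets that $w$ commutes with $x_1\dotsm x_n$; (2) centralizers of nontrivial elements in a free group are cyclic, forcing $w=(x_1\dotsm x_n)^k$ for some $k\in\Z$; and (3) injectivity of Artin's representation then gives $a=z^{\pm k}$ from $\alpha(a)=\alpha(z^{\pm k})$ (via \eqref{center-action}), hence $g=(x_1\dotsm x_n z)^{\pm k}$ by \eqref{semidirect-center}. Note also that this argument is uniform in $a\in B_n$: your case split on $\bar a\in S_n$ (and the explicit infinite family for $\bar a\neq e$) is correct but unnecessary, since the paper's first reduction---$g'\in C_{\F_n}(g)$ implies $g^{-1}g'\in\F_n$ has finite $\F_n$-conjugacy class, hence $g'=g$ by icc---already yields $C_{\F_n}(g)=\{g\}$ without knowing whether $a\in P_n$.
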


\begin{proof}
Assume that $C_{\F_n}(g)$ is finite. It is explained in \cite[Proof of Proposition~4.18]{SUT} that $C_{\F_n}(g)=\{g\}$, but for the convenience of the reader we sketch the argument.
Let $g'\in C_{\F_n}(g)$, so $g' = xgx^{-1}$ for some $x \in \F_n$.
Then we have $g^{-1}g'=(g^{-1}xg)x^{-1}\in\F_n$.
Moreover, $C_{\F_n}(g^{-1}g') \subseteq C_{\F_n}(g)^{-1} C_{\F_n}(g') = C_{\F_n}(g)^{-1} C_{\F_n}(g)$, so
\[
\lvert C_{\F_n}(g^{-1}g')\rvert \leq \lvert C_{\F_n}(g)^{-1} C_{\F_n}(g')\rvert \leq \lvert C_{\F_n}(g)\rvert^2 < \infty.
\]
Since $\F_n$ is icc, we must have $g^{-1}g'=e$. Thus, $g'=g$, that is, $C_{\F_n}(g)=\{g\}$, and it follows that $g$ belongs to the centralizer of $\F_n$ in $\F_n\rtimes_\alpha B_n$.

Next, let $y\in\F_n$ and $a\in B_n$ so that $g=ya\in\F_n\rtimes_\alpha B_n$. Since $g$ belongs to the centralizer, it commutes with $x_i$ for all $i$, i.e., $yax_ia^{-1}y^{-1}=x_i$, that is, $\alpha(a)(x_i)=y^{-1}x_iy$ for all $i$. Then
\[
x_1\dotsm x_n=\alpha(a)(x_1\dotsm x_n)=y^{-1}x_1\dotsm x_ny,
\]
meaning that $y$ commutes with $x_1\dotsm x_n$, and thus $y=(x_1\dotsm x_n)^k$ for some $k\in\Z$ (recall that the centralizer of a single element in a free group is a cyclic subgroup). This again means by \eqref{center-action} that $\alpha(a)=\alpha(z^k)$, that is, $a=z^k$ because of injectivity of $\alpha$. Hence, by \eqref{semidirect-center},
\[
g=ya=(x_1\dotsm x_n)^kz^k=(x_1\dotsm x_nz)^k\in Z(\F_n\rtimes_\alpha B_n).\qedhere
\]
\end{proof}
The analogous result and proof also holds for $P_n$ in place of $B_n$.

\begin{lemma}\label{kleppner-AB}
Let $\varphi\in Z^1(B_n,\op{Hom}(\F_n,\T))$. The following are equivalent:
\begin{itemize}
\item[(i)] $\varphi(z,x_j)$ is nontorsion for some $1\leq j\leq n$,
\item[(ii)] $(\F_n\rtimes_\alpha B_n,\F_n,\sigma^\varphi)$ satisfies the relative Kleppner condition,
\item[(iii)] $(\F_n\rtimes_\alpha B_n,\sigma^\varphi)$ satisfies Kleppner's condition.
\end{itemize}
\end{lemma}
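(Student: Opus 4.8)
The plan is to prove the cycle (i) $\Rightarrow$ (ii) $\Rightarrow$ (iii) $\Rightarrow$ (i), using Lemma~\ref{braid-centralizer} to control centralizers and the computations in \eqref{eq:z-relation} and \eqref{varphi-z} to evaluate $\sigma^\varphi$ on commuting pairs. Throughout, write $G=\F_n\rtimes_\alpha B_n$, $\sigma=\sigma^\varphi$, and recall $\sigma((x,a),(y,b))=\varphi(a,y)$, so that for $g=ya$ and $x\in\F_n$ one has $\sigma(g,x)=\varphi(a,x)$ while $\sigma(x,g)=\varphi(e,a\,\cdot)=1$; hence $g$ is $\sigma$-regular with respect to $\F_n$ precisely when $\varphi(a,x)=1$ for every $x\in\F_n$ that is centralized by $g$.

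For (i) $\Rightarrow$ (ii): suppose $\varphi(z,x_j)$ is nontorsion for some $j$; by \eqref{eq:z-relation} this says $\mu^\varphi$ is nontorsion, so $\varphi(z,x_i)=\mu^\varphi$ is nontorsion for \emph{every} $i$. Let $g=ya\in G\setminus\F_n$ be $\sigma$-regular with respect to $\F_n$ and suppose toward a contradiction that $C_{\F_n}(g)$ is finite. By Lemma~\ref{braid-centralizer}, $g\in Z(G)$, so $g=(x_1\dotsm x_n z)^k$ for some $k$; since $g\notin\F_n$ we have $k\neq 0$ and the $B_n$-part of $g$ is $z^k$. Because $g$ is central it commutes with every $x_i\in\F_n$, so $\sigma$-regularity forces $\varphi(z^k,x_i)=1$ for all $i$. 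But $\varphi(z^k,x_i)=\varphi(z,x_i)^k=(\mu^\varphi)^k$ by \eqref{varphi-z} (iterating the first identity there, using that $z$ acts trivially), and this is nontorsion, hence $\neq 1$ for $k\neq 0$ — a contradiction. Therefore $C_{\F_n}(g)$ is infinite, which is the relative Kleppner condition.

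For (ii) $\Rightarrow$ (iii): this is a general elementary passage from the relative to the absolute Kleppner condition. If $g\in G$ has a finite $\sigma$-regular conjugacy class in $G$, then in particular $C_{\F_n}(g)\subseteq C_G(g)$ is finite; the relative Kleppner condition then forces $g\in\F_n$, and for $g\in\F_n$ one needs to see the conjugacy class is already infinite unless $g=e$ — indeed $C_{\F_n}(g)$ is infinite for every $g\neq e$ since $\F_n$ is icc, contradicting finiteness of the full conjugacy class. (One should double-check the $\sigma$-regularity hypothesis is not needed here, only the finiteness; it is automatic since we only use containment of conjugacy classes.) Hence the only finite $\sigma$-regular class is $\{e\}$, which is Kleppner's condition. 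For (iii) $\Rightarrow$ (i): argue contrapositively. If every $\varphi(z,x_j)$ is torsion, then $\mu^\varphi$ is torsion of some order $m\geq 1$, and I claim $g=(x_1\dotsm x_n z)^m\in Z(G)$ is a nontrivial element with finite $\sigma$-regular conjugacy class. It is central, hence its conjugacy class is $\{g\}$, finite; and it is $\sigma$-regular because for any $h$ commuting with $g$ one checks $\sigma(g,h)=\sigma(h,g)$ — this reduces, via the bicharacter-type behaviour of $\sigma^\varphi$ and \eqref{varphi-z}, to the identity $\varphi(z^m,x)=(\mu^\varphi)^m=1$ on the relevant elements $x$. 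The element $g$ is nontrivial because $x_1\dotsm x_n z$ has infinite order in $G$.

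The main obstacle I expect is the (iii) $\Rightarrow$ (i) direction, specifically verifying carefully that the central element $g=(x_1\dotsm x_n z)^m$ really is $\sigma^\varphi$-regular in all of $G$ (not merely with respect to $\F_n$): one must check $\sigma^\varphi(g,h)=\sigma^\varphi(h,g)$ for \emph{every} $h\in G$ commuting with $g$, and since $g$ is central that means every $h\in G$. Writing $h=wb$ with $w\in\F_n$, $b\in B_n$, one has $\sigma^\varphi(g,h)=\varphi(z^m,w)$ and $\sigma^\varphi(h,g)=\varphi(b,(x_1\dotsm x_n)^m)$; reconciling these requires \eqref{eq:z-relation} (which ties $\varphi(b,x_1\dotsm x_n)^{n-1}$ to $\mu^\varphi$), \eqref{varphi-z}, and the fact that $z^m$ acts trivially — so the computation is where the 2-cocycle identity for $\sigma^\varphi$ and the cocycle relations for $\varphi$ all have to be combined correctly. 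The remaining implications are comparatively routine given Lemma~\ref{braid-centralizer}.
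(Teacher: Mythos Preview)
Your argument for (i)~$\Rightarrow$~(ii) matches the paper's, and your direct passage (ii)~$\Rightarrow$~(iii) is correct (the paper instead defers this implication, obtaining it indirectly via Theorem~\ref{SUT-rkc} and the fact that simplicity implies Kleppner's condition). The real issue is exactly where you flagged it: in the contrapositive of (iii)~$\Rightarrow$~(i), the exponent $m$ equal to the order of $\mu^\varphi$ is \emph{not} enough to make $g=(x_1\dotsm x_n z)^m$ $\sigma^\varphi$-regular in all of $G$.

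Concretely, for $h=(e,b)$ one has $\sigma^\varphi(g,h)=\varphi(z^m,e)=1$ while $\sigma^\varphi(h,g)=\varphi(b,(x_1\dotsm x_n)^m)$. Since $\alpha$ fixes $x_1\dotsm x_n$, the map $b\mapsto\varphi(b,x_1\dotsm x_n)$ is a homomorphism $B_n\to\T$, determined by the common value $\varphi(s_i,x_1\dotsm x_n)$. But \eqref{eq:z-relation} only gives $\varphi(s_i,x_1\dotsm x_n)^{n-1}=\mu^\varphi$, so $\varphi(s_i,x_1\dotsm x_n)^m$ need not equal $1$. For instance, with $n=3$ and $\mu_1=-1$, $\mu_2=1$ in the notation of Proposition~\ref{prop:1-cocycles}, one has $\varphi(s_1,x_1x_2x_3)=\mu_1\mu_2=-1$, hence $\mu^\varphi=(-1)^2=1$ and $m=1$, yet $\sigma^\varphi(s_1,g)=-1\neq 1=\sigma^\varphi(g,s_1)$ for $g=x_1x_2x_3z$. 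The fix, which is precisely what the paper does, is to use the exponent $m(n-1)$ instead of $m$: with $g=(x_1\dotsm x_n z)^{m(n-1)}$ one gets
\[
\varphi\bigl(b,(x_1\dotsm x_n)^{m(n-1)}\bigr)=\varphi(s_i,x_1\dotsm x_n)^{m(n-1)\ell(b)}=(\mu^\varphi)^{m\,\ell(b)}=1
\]
for every $b\in B_n$ (where $\ell$ is the length homomorphism $B_n\to\Z$), and the rest of your computation then goes through.
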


\begin{proof}
Assume that $\varphi(z,x_j)$ is nontorsion for some $1\leq j\leq n$.
Suppose that $g\notin\F_n$ and $C_{\F_n}(g)$ is finite.
By Lemma~\ref{braid-centralizer} we have that $g\in Z(\F_n\rtimes_\alpha B_n)$, so \eqref{semidirect-center} gives that $g=(x_1\dotsm x_n)^kz^k$ for some $k\in\Z\setminus\{0\}$. Then, using \eqref{varphi-z}
\[
\sigma^\varphi(g,x_j)\overline{\sigma^\varphi(x_j,g)}=\varphi(z^k,x_j)=\varphi(z,x_j)^k\neq 1.
\]
Hence, $(\F_n\rtimes_\alpha B_n,\F_n,\sigma^\varphi)$ satisfies the relative Kleppner condition.

Assume next that $\varphi(z,x_j)$ is torsion for all $j$, that is, there exists $k\in\Z\setminus\{0\}$ such that $\varphi(z^k,x_j)=\varphi(z,x_j)^k=1$ for all $j$ by \eqref{varphi-z}. Set
\[
g=(x_1\dotsm x_n)^{k(n-1)}z^{k(n-1)} \in Z(\F_n\rtimes_\alpha B_n).
\]
We will show that $g$ is $\sigma^\varphi$-regular, i.e., that $\sigma^\varphi(g,ya)=\sigma^\varphi(ya,g)$ for all $y\in\F_n$ and all $a\in B_n$.
Note that
\[
\sigma^\varphi(g,ya)\overline{\sigma^\varphi(ya,g)}=\varphi(z^{k(n-1)},y)\overline{\varphi(a,(x_1\dotsm x_n)^{k(n-1)})}.
\]
Since $z$ is central, $\varphi(z^{k(n-1)},y)=\varphi(z^k,y)^{n-1}=1$.
By applying \eqref{eq:z-relation} and \eqref{varphi-z}, we get that
\[
\varphi(s_i,x_1\dotsm x_n)^{k(n-1)}=\varphi(z^k,x_1\dotsm x_n)=1
\]
for all $i$, so $\varphi(a,x_1\dotsm x_n)^{k(n-1)}=1$ for all $a\in B_n$ as well.
Thus $g$ is $\sigma^\varphi$-regular, so $(\F_n\rtimes_\alpha B_n,\sigma^\varphi)$ does not satisfy Kleppner's condition.

We skip the proof of (ii)~$\Longrightarrow$~(iii) here, since it follows from the \hyperref[proof-AB]{proof} of Theorem~\ref{AB-theorem}.
\end{proof}

\begin{delayed-proof}[Proof of Theorem~\ref{AB-theorem}]\phantomsection\label{proof-AB}
We identify the two algebras in question with $C^*_r(\F_n\rtimes_\alpha B_n,\sigma^\varphi)$ and $W^*(\F_n\rtimes_\alpha B_n,\sigma^\varphi)$, and recall that the latter is a factor if and only if $(\F_n\rtimes_\alpha B_n,\sigma^\varphi)$ satisfies Kleppner's condition (cf.\ \cite{Kle}). Then (iv)~$\Longrightarrow$~(i) immediately follows from Lemma~\ref{kleppner-AB}.

Next, (i)~$\Longrightarrow$~(ii) and (i)~$\Longrightarrow$~(iii) follow from Lemma~\ref{kleppner-AB} and Proposition~\ref{SUT-rkc}, together with the well-known fact that $C^*_r(\F_n)$ is simple and has a unique trace (cf.\ \cite{Powers}).

Finally, (ii)~$\Longrightarrow$~(iv) and (iii)~$\Longrightarrow$~(iv) holds as Kleppner's condition is necessary both for simplicity and uniqueness of trace, see \cite[Section~2.3]{SUT} (this also takes care of the last implication from Lemma~\ref{kleppner-AB}).
\end{delayed-proof}

\begin{lemma}\label{kleppner-P}
Let $\varphi\in Z^1(P_n,\op{Hom}(\F_n,\T))$. Consider the following conditions:
\begin{itemize}
\item[(i)] $\varphi(z,x_k)$ is nontorsion for some $1\leq k\leq n$,
\item[(ii)] $(\F_n\rtimes_\alpha P_n,\F_n,\sigma^\varphi)$ satisfies the relative Kleppner condition,
\item[(iii)] $(\F_n\rtimes_\alpha P_n,\sigma^\varphi)$ satisfies Kleppner's condition,
\item[(iv)] $\varphi(a_{ij},x_1\dotsm x_n)$ or $\varphi(z,x_k)$ is nontorsion for some $1\leq i<j\leq n$ or $1\leq k\leq n$.
\end{itemize}
Then (i) $\Longleftrightarrow$ (ii) $\Longrightarrow$ (iii) $\Longleftrightarrow$ (iv).
\end{lemma}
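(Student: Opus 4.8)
The plan is to reduce all four conditions, via the $P_n$-analog of Lemma~\ref{braid-centralizer}, to an explicit test for $\sigma^\varphi$-regularity of the central elements of $\F_n\rtimes_\alpha P_n$. First I would record two preliminary facts. By Proposition~\ref{prop:1-cocycles} the action of $P_n$ on $\op{Hom}(\F_n,\T)$ is trivial, so $\varphi(ab,x)=\varphi(a,x)\varphi(b,x)$ while $\varphi(a,\cdot)\in\op{Hom}(\F_n,\T)$ for each $a\in P_n$; and the proof of Lemma~\ref{braid-centralizer} applies with $P_n$ in place of $B_n$ (using that $z$ generates $Z(P_n)$), showing that any $g\in\F_n\rtimes_\alpha P_n$ with $C_{\F_n}(g)$ finite has the form $g_m:=(x_1\dotsm x_nz)^m=((x_1\dotsm x_n)^m,z^m)$ for some $m\in\Z$, by \eqref{center-action}--\eqref{semidirect-center}. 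Using $\sigma^\varphi((x,a),(y,b))=\varphi(a,y)$ and the homomorphism properties of $\varphi$, a short computation then gives
\[
\sigma^\varphi(g_m,h)\,\overline{\sigma^\varphi(h,g_m)}=\varphi(z,y)^m\,\overline{\varphi(a,x_1\dotsm x_n)^m}
\]
for every $h=(y,a)\in\F_n\rtimes_\alpha P_n$, and everything will follow by reading off this identity.

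For (i)$\Longleftrightarrow$(ii): a violation of the relative Kleppner condition is some $g\in(\F_n\rtimes_\alpha P_n)\setminus\F_n$ that is $\sigma^\varphi$-regular with respect to $\F_n$ and has $C_{\F_n}(g)$ finite; by the above such a $g$ equals $g_m$ with $m\neq0$, and, since $g_m$ is central, restricting $h$ to $\F_n$ (i.e.\ $a=e$) in the identity shows that $\sigma^\varphi$-regularity with respect to $\F_n$ is equivalent to $\varphi(z,x_k)^m=1$ for all $k$. So the relative Kleppner condition fails exactly when some $m\neq0$ annihilates every $\varphi(z,x_k)$, i.e.\ when all the $\varphi(z,x_k)$ are torsion, which is the negation of (i).

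For (iii)$\Longleftrightarrow$(iv): since $\F_n\rtimes_\alpha P_n\simeq P_{n+1}$ (Remark~\ref{FC-Z}, or simply $C_{\F_n}(g)\subseteq C_{\F_n\rtimes_\alpha P_n}(g)$ combined with the preliminary fact above), every nontrivial finite conjugacy class of $\F_n\rtimes_\alpha P_n$ is a singleton $\{g_m\}$ with $m\neq0$, and, $g_m$ being central, it is $\sigma^\varphi$-regular iff $\sigma^\varphi(g_m,h)=\sigma^\varphi(h,g_m)$ for all $h$. Reading off the identity with $a=e$, and then with $y=e$, this holds iff $\varphi(z,x_k)^m=1$ for all $k$ \emph{and} $\varphi(a,x_1\dotsm x_n)^m=1$ for all $a\in P_n$; as $a\mapsto\varphi(a,x_1\dotsm x_n)$ is a homomorphism and $P_n$ is generated by the $a_{ij}$, the second condition is equivalent to $\varphi(a_{ij},x_1\dotsm x_n)^m=1$ for all $i<j$. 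Hence Kleppner's condition fails exactly when some $m\neq0$ simultaneously annihilates all the $\varphi(z,x_k)$ and all the $\varphi(a_{ij},x_1\dotsm x_n)$, i.e.\ when all these values are torsion, which is the negation of (iv).

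Finally, (ii)$\Longrightarrow$(iii) is then formal: condition (i) is the special case of (iv) in which a nontorsion value occurs among the $\varphi(z,x_k)$, so (i)$\Longrightarrow$(iv), and chaining the two equivalences just proved yields (ii)$\Longleftrightarrow$(i)$\Longrightarrow$(iv)$\Longleftrightarrow$(iii) (alternatively, invoke Theorem~\ref{SUT-rkc}, simplicity of $C^*_r(\F_n)$, and the necessity of Kleppner's condition for simplicity). The step I expect to demand the most care is the $\sigma^\varphi$-regularity computation underlying (iii)$\Longleftrightarrow$(iv): one must check that $\sigma^\varphi$-regularity of $g_m$ forces both homomorphisms $y\mapsto\varphi(z,y)^m$ and $a\mapsto\varphi(a,x_1\dotsm x_n)^m$ into $\T$ to be trivial, and the identification of the finite conjugacy classes with central elements should be spelled out carefully; the remaining cocycle bookkeeping is routine and parallels the braid-group argument in Lemma~\ref{kleppner-AB}.
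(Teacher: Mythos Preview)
Your proposal is correct and follows essentially the same approach as the paper: both reduce the problem, via the $P_n$-analog of Lemma~\ref{braid-centralizer} and Remark~\ref{FC-Z}, to checking $\sigma^\varphi$-regularity of the central elements $g_m=((x_1\dotsm x_n)^m,z^m)$, and both compute the cocycle ratio against $x_k$ and $a_{ij}$ to obtain the characterizations. Your write-up is slightly more explicit---you package the full ratio $\sigma^\varphi(g_m,h)\overline{\sigma^\varphi(h,g_m)}=\varphi(z,y)^m\overline{\varphi(a,x_1\dotsm x_n)^m}$ in one formula and spell out both directions of (iii)$\Longleftrightarrow$(iv), whereas the paper records only the contrapositive (iii)$\Longrightarrow$(iv) and leaves the rest implicit---but the underlying argument is identical.
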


\begin{proof}
First, (i)~$\Longrightarrow$~(ii) follows by the same argument as in Lemma~\ref{kleppner-AB}, applying Lemma~\ref{braid-centralizer}.
For the converse, if $\varphi(z,x_i)$ is torsion for all $i$, we can find $\ell\in\Z\setminus\{0\}$ such that $\varphi(z,x_i)^\ell=1$ for all $i$, and then choose $g=(x_1\dotsm x_n)^\ell z^\ell$. Then $C_{\F_n}(g)=\{g\}$ and $\sigma^\varphi(g,x)=\sigma^\varphi(x,g)$ for all $x\in\F_n$.

By Remark~\ref{FC-Z}, to determine Kleppner's condition, it is enough to deal with central elements, i.e., of the form $g=(x_1\dotsm x_n)^\ell z^\ell$ for $\ell\in\Z$.
We note that
\[
\overline{\sigma^\varphi(g,a_{ij})}\sigma^\varphi(a_{ij},g)=\varphi(a_{ij},(x_1\dotsm x_n)^\ell)=\varphi(a_{ij},x_1\dotsm x_n)^\ell
\]
for all $1\leq i<j\leq n$ and
\[
\sigma^\varphi(g,x_k)\overline{\sigma^\varphi(x_k,g)}=\varphi(z^\ell,x_k)=\varphi(z,x_k)^\ell
\]
for all $1\leq k\leq n$.
Hence, we conclude that (iii)~$\Longrightarrow$~(iv), and then (i)~$\Longrightarrow$~(iv) is obvious.
\end{proof}

\begin{delayed-proof}[Proof of Theorem~\ref{P-theorem}]\phantomsection\label{proof-P}
This goes along similar lines as the \hyperref[proof-AB]{proof} of Theorem~\ref{AB-theorem}, except we do not have that Kleppner's condition implies the relative Kleppner condition for $n\geq 3$. For $n=2$, see Example~\ref{ex2}.
\end{delayed-proof}

\begin{example}\label{ex2}
When $n=2$, then $(\F_2\rtimes_\alpha B_2,\sigma^\varphi)$ satisfies Kleppner's condition precisely when $\mu=\mu^\varphi$ is nontorsion.
This completes the proof of first statement of Theorem~\ref{primitivity} since primitivity implies triviality of the center, which again implies Kleppner's condition by \cite[Corollary~2.8]{Om}.

For the second statement of Theorem~\ref{primitivity}, recall from \eqref{P3} that $\F_2\rtimes_\alpha P_2\simeq\F_2\times\Z$, so
\[
C^*(\F_2)\rtimes_{\alpha^\varphi} P_2 \simeq C^*(\F_2 \times \Z,\sigma)
\]
for some $2$-cocycle $\sigma$ on $\F_2 \times \Z$. This $C^*$-algebra is discussed in \cite[Example~3.11]{Om}, where it is shown to be primitive if and only if at least one of $\nu_1=\varphi(a,x_1)$, $\nu_2=\varphi(a,x_2)$ is nontorsion, which is again equivalent with Kleppner's condition.

Finally, the above also takes care of the very last statement of Theorem~\ref{AB-theorem}.
\end{example}

\section{More on twisted group \texorpdfstring{$C^*$}{C*}-algebras and the second cohomology group}\label{sec:cohomology}

The nontrivial $2$-cocycles on $B_n$ are the ones lifted from $S_n$, that is,
\[
H^2(B_n,\T)=H^2(S_n,\T)=\begin{cases} 1 & \text{if } n\leq 3, \\ \Z/2\Z & \text{if } n\geq 4.\end{cases}
\]
Let $n\geq 4$ and $\sigma$ denote a nontrivial $2$-cocycle on $B_n$.
The universal $C^*$-algebra generated by a set of unitaries $\{U_i\}_{i=1}^{n-1}$ subject to the relations
\[
\begin{gathered}
U_iU_{i+1}U_i=U_{i+1}U_iU_{i+1} \text{ for all } 1\leq i\leq n-2, \\
U_iU_j=-U_jU_i \text{ when } \lvert i-j \rvert \geq 2.
\end{gathered}
\]
is isomorphic to $C^*(B_n,\sigma)$.
Since the image of $Z(B_n)$ is trivial under the quotient map $B_n \to S_n$, every element of $Z(B_n)$ is $\sigma$-regular.
It follows that $(B_n,\sigma)$ never satisfies Kleppner's condition, so $C^*(B_n,\sigma)$ and $C^*_r(B_n,\sigma)$ both have nontrivial centers.

\begin{lemma}\label{PL second cohomology}
The second cohomology groups of $A_{n+1}$ and $P_{n+1}$ are
\[
\begin{gathered}
H^2(A_{n+1},\T) \simeq H^2(B_n,\T) \times H^1(B_n,\op{Hom}(\F_n,\T)),\\
H^2(P_{n+1},\T) \simeq H^2(P_n,\T) \times H^1(P_n,\op{Hom}(\F_n,\T)),
\end{gathered}
\]
and every $2$-cocycle on $P_{n+1}$ is similar to one given by
\begin{equation}\label{eq:mackey}
\sigma(xa,yb)=\varphi(a,y)\omega(a,b) \text{ for $a,b\in P_n$ and $x,y\in\F_n$},
\end{equation}
where $\varphi\in Z^1(P_n,\op{Hom}(\F_n,\T))$ and $\omega$ is a $2$-cocycle on $P_n$.

Moreover, if $\sigma_1$ and $\sigma_2$ are $2$-cocycles on $P_{n+1}$ of the form \eqref{eq:mackey},
then $\sigma_1 \sim \sigma_2$ if and only if $\varphi_1 \sim \varphi_2$ and $\omega_1 \sim \omega_2$.

The last two statements also hold when replacing $P_{n+1}$ and $P_n$ by $A_{n+1}$ and $B_n$.
\end{lemma}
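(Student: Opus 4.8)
The plan is to treat the two extensions $1\to\F_n\to A_{n+1}\to B_n\to 1$ and $1\to\F_n\to P_{n+1}\to P_n\to 1$ uniformly: both are of the shape $1\to\F_n\to G\to K\to 1$ with $G=\F_n\rtimes_\alpha K$ a semidirect product and $K\in\{B_n,P_n\}$, and since the kernel $\F_n$ and the action $\alpha$ are literally the same in the two cases, one computation produces both displayed isomorphisms and, simultaneously, the last two statements for the pair $(P_{n+1},P_n)$ and for $(A_{n+1},B_n)$. Two facts do the work. First, $\F_n$ is free, so $H^2(\F_n,\T)=0$; equivalently every central extension of $\F_n$ by $\T$ splits, by lifting a free generating set. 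Second, the extension $G\to K$ splits, so the section $K\hookrightarrow G$ makes inflation $H^*(K,\T)\to H^*(G,\T)$ split injective in every degree.

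For the existence of the normal form \eqref{eq:mackey}: given $\sigma\in Z^2(G,\T)$, I would first use $H^2(\F_n,\T)=0$ to write $\sigma|_{\F_n\times\F_n}=\partial\beta_0$ for some $\beta_0\colon\F_n\to\T$, extend $\beta_0$ arbitrarily to $G$, and replace $\sigma$ by $\sigma\,\overline{\partial\beta_0}$, so that we may assume $\sigma$ is trivial on $\F_n\times\F_n$. Writing each element of $G$ uniquely as $xa$ with $x\in\F_n$, $a\in K$, and using the relation $ay=\alpha_a(y)a$ in $G$, a sequence of applications of the $2$-cocycle identity then shows that $\sigma$ is similar to the cocycle $\sigma(xa,yb)=\varphi(a,y)\omega(a,b)$, where $\omega:=\sigma|_{K\times K}$ is the restriction along the section (automatically in $Z^2(K,\T)$) and $\varphi(a,y):=\sigma(a,y)\overline{\sigma(\alpha_a(y),a)}$; the $2$-cocycle identity for $\sigma$ forces $\varphi\in Z^1(K,\op{Hom}(\F_n,\T))$. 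This is exactly the bookkeeping underlying the identification $C^*(G,\sigma^\varphi)\simeq C^*(\F_n)\rtimes_{\alpha^\varphi}K$ in Section~\ref{sec:semidirect} (cf.\ \cite[Theorem~2.1]{Bed}, and the analogous reductions in \cite{SUT,Om}).

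For the similarity criterion and the assembly of the isomorphism: let $\sigma_1,\sigma_2$ be in the form \eqref{eq:mackey}, both trivial on $\F_n\times\F_n$. If $\sigma_1\sim\sigma_2$ via $\beta\colon G\to\T$, then $\partial\beta$ is trivial on $\F_n\times\F_n$, so $\beta|_{\F_n}\in\op{Hom}(\F_n,\T)$; evaluating $\sigma_1\overline{\sigma_2}=\partial\beta$ on pairs from $K\times\F_n$ shows that $\varphi_1\overline{\varphi_2}$ is the $1$-coboundary implemented by $\beta|_{\F_n}$, so $\varphi_1\sim\varphi_2$; after dividing out this coboundary one may assume $\varphi_1=\varphi_2$, whereupon $\beta$ can be arranged to be trivial on $\F_n$, hence to descend to $g\colon K\to\T$ with $\partial g=\omega_1\overline{\omega_2}$, so $\omega_1\sim\omega_2$. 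Conversely, if $\varphi_1\sim\varphi_2$ and $\omega_1\sim\omega_2$ with implementing functions $f$ and $g$, then $\beta(xa):=f(x)g(a)$ satisfies $\partial\beta=\sigma_1\overline{\sigma_2}$. This gives the last displayed claim, and then the map $H^2(G,\T)\to H^2(K,\T)\times H^1(K,\op{Hom}(\F_n,\T))$ sending $[\sigma]$ to $([\omega],[\varphi])$ read off from a representative in the form \eqref{eq:mackey} is well defined and injective by the criterion, surjective by the existence of the normal form, and a homomorphism since $\varphi_1(a,y)\omega_1(a,b)\cdot\varphi_2(a,y)\omega_2(a,b)=(\varphi_1\varphi_2)(a,y)(\omega_1\omega_2)(a,b)$; this yields the two isomorphisms. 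Conceptually this is the partial degeneration of the Lyndon--Hochschild--Serre spectral sequence: $\F_n$ being free kills the rows $q\ge 2$, the split structure kills the transgression and the differential $H^1(K,\op{Hom}(\F_n,\T))\to H^3(K,\T)$, leaving $0\to H^2(K,\T)\to H^2(G,\T)\to H^1(K,\op{Hom}(\F_n,\T))\to 0$, which splits via $\varphi\mapsto[\sigma^\varphi]$.

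The main obstacle is the cochain computation in the normal-form step: after trivializing $\sigma$ on $\F_n\times\F_n$ one must still, by repeated use of the $2$-cocycle identity, remove the dependence of $\sigma(xa,yb)$ on $x$ and absorb the residual cross-terms $\sigma(x,b)$ with $x\in\F_n$, $b\in K$ into a further coboundary --- which is where a second appeal to $H^2(\F_n,\T)=0$ may be needed. Once the normal form is in hand, the similarity criterion and the construction of the isomorphism are purely formal, and since no step uses anything about $K$ beyond its role as the acting group, the statements for $A_{n+1}$ and $B_n$ follow by the very same argument as for $P_{n+1}$ and $P_n$.
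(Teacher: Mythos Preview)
Your approach is correct and essentially the same as the paper's: both rest on $H^2(\F_n,\T)=0$ and the split structure of $\F_n\rtimes_\alpha K$, invoking the Lyndon--Hochschild--Serre spectral sequence for the cohomology isomorphism and the Mackey normal form for the explicit cocycle description. The paper simply cites \cite[p.~715]{PR}, \cite[Theorem~9.4]{Mac}, and \cite[2.1--2.4]{Om2} for these three ingredients, whereas you sketch the cochain-level argument that unpacks them; your honest flag that the normal-form reduction is the only place requiring real computation is accurate, and that computation is exactly what those references provide.
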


\begin{proof}
Since $H^2(\F_n,\T)$ is trivial for all $n$, the first statement follows directly from the Lyndon-Hochschild-Serre spectral sequence (see e.g.\ \cite[p.~715]{PR}).
The second statement is deduced from \cite[Theorem~9.4]{Mac}, and the third statement is explained in \cite[2.1-2.4]{Om2} (see also \cite[Appendix~2]{PR}).
\end{proof}
It now follows from Lemma~\ref{PL second cohomology} that
\[
H^2(A_3,\T)\simeq\T, \quad H^2(A_4,\T)\simeq\T^2, \quad H^2(A_n,\T)\simeq\T^2\oplus\Z_2 \text{ for } n\geq 5,
\]
and by applying some basic summation formulas, we get
\[
H^2(P_n,\T)\simeq\T^{\frac{1}{24}n(n-1)(n-2)(3n-1)}.
\]

\begin{proposition}
Let $n\geq 3$ and $\sigma\in Z^2(A_n,\T)$. Then the following are equivalent:
\begin{itemize}
\item[(i)] $(A_n,\sigma)$ is $C^*$-simple,
\item[(ii)] $(A_n,\sigma)$ has the unique trace property,
\item[(iii)] $(A_n,\sigma)$ satisfies Kleppner's condition.
\end{itemize}
In particular, $A_n$ belongs to the class $\mathcal{K}$ introduced in \cite{SUT}.
\end{proposition}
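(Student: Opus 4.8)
The plan is to prove the chain (i) $\Rightarrow$ (ii) $\Rightarrow$ (iii) $\Rightarrow$ (i), where the first two implications are general facts and only the last requires work specific to $A_n$. The implication (i) $\Rightarrow$ (ii) is immediate from \cite[Corollary~4.3]{BKKO} together with \cite[Corollary~4.5 and~5.3]{BK} (or directly: a simple $C^*_r(G,\sigma)$ with a canonical trace; the content is that $C^*$-simplicity of $(G,\sigma)$ forces uniqueness of the trace), and (ii) $\Rightarrow$ (iii) holds because Kleppner's condition is necessary for $C^*_r(G,\sigma)$ to have a unique tracial state, as recorded in \cite[Section~2.3]{SUT}. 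So the real task is (iii) $\Rightarrow$ (i): assuming $(A_n,\sigma)$ satisfies Kleppner's condition, show $C^*_r(A_n,\sigma)$ is simple.

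For this I would use the semidirect product description $A_{n+1}\simeq\F_n\rtimes_\alpha B_n$ (so working with $A_{n+1}$ for $n\geq 2$, matching the indexing of the rest of the paper), combined with Lemma~\ref{PL second cohomology}: every $2$-cocycle on $A_{n+1}$ is similar to one of the form $\sigma(xa,yb)=\varphi(a,y)\omega(a,b)$ with $\varphi\in Z^1(B_n,\op{Hom}(\F_n,\T))$ and $\omega\in Z^2(B_n,\T)$, and similar cocycles give isomorphic reduced twisted group $C^*$-algebras, so we may assume $\sigma$ has this form. By \cite[Theorem~2.1 and preceding]{Bed}-style reasoning (the identification in Section~\ref{sec:semidirect}, now in the further-twisted setting of \cite[2.1--2.4]{Om2}), $C^*_r(A_{n+1},\sigma)\simeq C^*_r(\F_n)\rtimes^r_{\alpha^\varphi} (B_n,\omega)$, i.e.\ a twisted crossed product of the simple, unique-trace algebra $C^*_r(\F_n)$ by $(B_n,\omega)$. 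The strategy is then the same two-step argument as in the proof of Theorem~\ref{AB-theorem}: first check the relative Kleppner condition for $(A_{n+1},\F_n,\sigma)$, and then invoke Theorem~\ref{SUT-rkc} together with simplicity of $C^*_r(\F_n)$ to conclude. The relative Kleppner condition here says $C_{\F_n}(g)$ is infinite whenever $g\in A_{n+1}\setminus\F_n$ is $\sigma$-regular with respect to $\F_n$; by Lemma~\ref{braid-centralizer} the only $g\in A_{n+1}\setminus\F_n$ with finite $C_{\F_n}(g)$ are the central elements $g=(x_1\dotsm x_n z)^k$, and one computes as in Lemma~\ref{kleppner-AB} that $\sigma$-regularity of such a $g$ with respect to $\F_n$ forces $\varphi(z,x_j)^k=1$ for all $j$. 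So if $(A_{n+1},\F_n,\sigma)$ fails the relative Kleppner condition, then $\varphi(z,\cdot)$ is torsion; but then, exactly as in Lemma~\ref{kleppner-AB} and using $\varphi(a,(x_1\dotsm x_n)^k)^{n-1}=\varphi(z^k,x_1\dotsm x_n)$ from \eqref{eq:z-relation}, the element $g=(x_1\dotsm x_n z)^{k(n-1)}$ becomes $\sigma$-regular in all of $A_{n+1}$ (here one also needs that $\omega$, being lifted from $S_n$ and hence trivial on the image of $z$, contributes nothing), contradicting Kleppner's condition for $(A_{n+1},\sigma)$. Hence Kleppner $\Rightarrow$ relative Kleppner, and Theorem~\ref{SUT-rkc} gives simplicity.

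The main obstacle I anticipate is handling the nontrivial $\omega$-component for $n\geq 5$ (where $H^2(B_n,\T)=\Z/2\Z$): one must verify that the presence of a spin-type cocycle on the quotient $B_n$ does not interfere with either the relative Kleppner computation or the appeal to Theorem~\ref{SUT-rkc}. For the relative Kleppner condition this is genuinely fine, since the relevant commutators $\sigma(g,x)\overline{\sigma(x,g)}$ for $x\in\F_n$ only see the $\varphi$-part of $\sigma$. For the comparison of $\sigma$-regularity in $A_{n+1}$ versus $B_n$, one uses that $z\in B_n$ maps to $e\in S_n$, so $\omega(z,\cdot)=\omega(\cdot,z)=1$, and therefore the $\omega$-contribution to $\sigma$-regularity of $g=(x_1\dotsm x_n z)^{k(n-1)}$ is trivial — the argument reduces to the purely $\varphi$-theoretic one already handled. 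A secondary subtlety is that Theorem~\ref{SUT-rkc} is stated for untwisted $C^*_r(G,\sigma)$ with $\sigma$ a cocycle on the big group $G=A_{n+1}$; since we have genuinely realized $C^*_r(A_{n+1},\sigma)$ as a twisted group algebra with normal subgroup $\F_n$ on which $\sigma$ restricts trivially (as $H^2(\F_n,\T)=1$), Theorem~\ref{SUT-rkc} applies directly with $\sigma'\sim 1$ and $C^*_r(\F_n,\sigma')\simeq C^*_r(\F_n)$ simple. Finally, the phrase ``$A_n$ belongs to the class $\mathcal{K}$'' is, by the definition of $\mathcal{K}$ in \cite{SUT}, precisely the assertion that (i)--(iii) are equivalent for all $\sigma\in Z^2(A_n,\T)$, so it requires no further argument once the three-way equivalence is established.
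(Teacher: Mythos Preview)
Your argument for (iii) $\Rightarrow$ (i) is essentially the paper's: reduce $\sigma$ to Mackey form $(\varphi,\omega)$ via Lemma~\ref{PL second cohomology}, use Lemma~\ref{braid-centralizer} to identify the only candidates $g=(x_1\dotsm x_nz)^k$ for failure of the relative Kleppner condition, show that if such a $g$ is $\sigma$-regular with respect to $\F_n$ then a power of it is $\sigma$-regular in all of $A_{n+1}$ (the $\omega$-part cancels since $z$ maps to $e\in S_n$, exactly as the paper observes), conclude Kleppner $\Rightarrow$ relative Kleppner, and apply Theorem~\ref{SUT-rkc}. This is the same approach.

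There is however a genuine gap in your chain (i) $\Rightarrow$ (ii) $\Rightarrow$ (iii) $\Rightarrow$ (i): the first implication is \emph{not} a known general fact in the twisted setting. The references you cite do not say this. \cite[Corollary~4.3]{BKKO} concerns untwisted groups, and \cite[Corollaries~4.5 and~5.3]{BK} transfer $C^*$-simplicity or unique trace from the \emph{group} $G$ to all $(G,\sigma)$; none of them shows that simplicity of $C^*_r(G,\sigma)$ implies uniqueness of its trace. The paper itself explicitly flags this as open in the remark preceding Remark~\ref{FC-Z}: ``for a pair $(G,\sigma)$ with $\sigma\not\sim 1$, it is not known whether simplicity of $C^*_r(G,\sigma)$ implies that $\tau$ is its only tracial state.'' Since $A_n$ has nontrivial center, it is not $C^*$-simple as a group, so you cannot route through the untwisted result.

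The fix is immediate and is what the paper does: drop the chain and instead observe that both (i) $\Rightarrow$ (iii) and (ii) $\Rightarrow$ (iii) are general (Kleppner's condition is necessary for either simplicity or unique trace, cf.\ \cite[Section~2.3]{SUT}), and then your relative-Kleppner argument together with Theorem~\ref{SUT-rkc} gives \emph{both} (iii) $\Rightarrow$ (i) and (iii) $\Rightarrow$ (ii), since Theorem~\ref{SUT-rkc} yields simplicity and unique trace simultaneously from simplicity and unique trace of $C^*_r(\F_n)$.
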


\begin{proof}
Let $n\geq 4$ and $\sigma$ be a $2$-cocycle on $A_{n+1}$ of the form \eqref{eq:mackey}, coming from a pair $(\varphi,\omega)$.
We proceed as in the proof of Lemma~\ref{kleppner-AB}, choosing $g$ the same way, and compute that
\[
\sigma(g,ya)\overline{\sigma(ya,g)}=\varphi(z^{k(n-1)},y)\omega(z^{k(n-1)},a)\overline{\varphi(a,(x_1\dotsm x_n)^{k(n-1)})\omega(a,z^{k(n-1)})}.
\]
It is observed above that all elements of $Z(B_n)$ are $\omega$-regular, so $\omega(z^{k(n-1)},a)=\omega(a,z^{k(n-1)})$, meaning that the $\omega$ disappears, and the rest of the argument goes exactly as in Lemma~\ref{kleppner-AB} and the \hyperref[proof-AB]{proof} of Theorem~\ref{AB-theorem}.
\end{proof}

\begin{example}
Let $\sigma$ be a $2$-cocycle on $A_3$.
By Proposition~\ref{PL second cohomology} and~\ref{prop:1-cocycles}, we may, up to similarity, assume that $\sigma=\sigma^\varphi$, where $\varphi(s,x_1)=1$ and $\sigma(s,x_2)=\mu$ for some $\mu\in\T$. By using \eqref{A3}, it is straightforward to check that $C^*(A_3,\sigma)$ is the universal $C^*$-algebra generated by unitaries $U_1,U_2$ subject to the relation
\[
(U_1U_2)^2=\mu(U_2U_1)^2,
\]
and by Theorem~\ref{primitivity}, this algebra is primitive if and only if $\mu$ is nontorsion.
\end{example}

\begin{proposition}
Let $\sigma\in Z^2(P_{n+1},\T)$ be of the type described in \eqref{eq:mackey}, coming from a pair $(\varphi,\omega)$.
Consider the following conditions:
\begin{itemize}
\item[(i)] $\varphi(z,x_k)$ is nontorsion for some $1\leq k\leq n$,
\item[(ii)] $(P_{n+1},\sigma)$ is $C^*$-simple,
\item[(iii)] $(P_{n+1},\sigma)$ has the unique trace property,
\item[(iv)] at least one of the following hold:
\begin{itemize}
\item[] $\varphi(a_{ij},x_1\dotsm x_n)\omega(a_{ij},z)\overline{\omega(z,a_{ij})}$ is nontorsion for some $1\leq i<j\leq n$,
\item[] $\varphi(z,x_k)$ is nontorsion for some $1\leq k\leq n$.
\end{itemize}
\end{itemize}
Then (i)~$\Longrightarrow$~(ii)~$\Longrightarrow$~(iv) and (i)~$\Longrightarrow$~(iii)~$\Longrightarrow$~(iv).

Moreover, if $n=3$, then (iv)~$\Longrightarrow$~(i).
\end{proposition}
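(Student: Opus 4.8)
The plan is to run the argument of Lemma~\ref{kleppner-P} and the proof of Theorem~\ref{P-theorem} in the presence of the extra $2$-cocycle $\omega$, identifying $C^*_r(P_{n+1},\sigma)$ as in Section~\ref{sec:semidirect}. Since $H^2(\F_n,\T)$ is trivial, the restriction $\sigma|_{\F_n}$ is a coboundary, so $C^*_r(\F_n,\sigma|_{\F_n})\simeq C^*_r(\F_n)$, which is simple with a unique tracial state by \cite{Powers}. Throughout I write $\zeta=x_1\dotsm x_n z$ for the generator of $Z(P_{n+1})$, so that $\zeta^\ell=((x_1\dotsm x_n)^\ell,z^\ell)$ in $\F_n\rtimes_\alpha P_n$ by \eqref{center-action} and \eqref{semidirect-center}.

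To obtain (i) $\Longrightarrow$ (ii) and (i) $\Longrightarrow$ (iii) I would verify that $(P_{n+1},\F_n,\sigma)$ satisfies the relative Kleppner condition and then invoke Theorem~\ref{SUT-rkc}. If $g\in P_{n+1}\setminus\F_n$ has $C_{\F_n}(g)$ finite, then by the analogue of Lemma~\ref{braid-centralizer} for $P_n$ one gets $g=\zeta^\ell$ for some $\ell\neq 0$, and then $\sigma(g,x_k)\overline{\sigma(x_k,g)}=\varphi(z^\ell,x_k)=\varphi(z,x_k)^\ell$, which is nontrivial for a $k$ with $\varphi(z,x_k)$ nontorsion; hence such $g$ is not $\sigma$-regular with respect to $\F_n$, the relative Kleppner condition holds, and Theorem~\ref{SUT-rkc} yields (ii) and (iii).

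For (ii) $\Longrightarrow$ (iv) and (iii) $\Longrightarrow$ (iv), recall that both simplicity and uniqueness of trace force Kleppner's condition by \cite[Section~2.3]{SUT} and \cite[Corollary~2.8]{Om}, so it suffices to show that Kleppner's condition implies (iv), equivalently that the failure of (iv) produces a nontrivial finite $\sigma$-regular conjugacy class. Since $\zeta$ is central, $\chi_\zeta(w):=\sigma(\zeta,w)\overline{\sigma(w,\zeta)}$ defines a homomorphism $P_{n+1}\to\T$, and a short computation with \eqref{eq:mackey} gives
\[
\chi_\zeta(a_{k,n+1})=\varphi(z,x_k),\qquad \chi_\zeta(a_{ij})=\overline{\varphi(a_{ij},x_1\dotsm x_n)\,\omega(a_{ij},z)\,\overline{\omega(z,a_{ij})}}
\]
for $1\leq k\leq n$ and $1\leq i<j\leq n$. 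As the $a_{ij}$ with $1\leq i<j\leq n+1$ generate $P_{n+1}$, failure of (iv) makes $\chi_\zeta$ torsion on all generators, hence of finite order; choosing $\ell\neq 0$ with $\chi_\zeta(\cdot)^\ell\equiv 1$ makes $\zeta^\ell$ a $\sigma$-regular element whose conjugacy class $\{\zeta^\ell\}$ is finite and nontrivial (conjugacy classes of $P_{n+1}$ are singletons or infinite by Remark~\ref{FC-Z}), contradicting Kleppner's condition. The implication (i) $\Longrightarrow$ (iv) is trivial.

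The main obstacle is the final implication (iv) $\Longrightarrow$ (i) for $n=3$. Here $P_{n+1}=P_4=\F_3\rtimes_\alpha P_3$ with $P_3\simeq\F_2\times\Z$ and $z$ generating the $\Z$-factor (cf.\ \eqref{P3}), and the plan is to imitate Example~\ref{ex2}, which disposed of the case $n=2$ for $P_3$ using this identification together with \cite[Example~3.11]{Om}. Concretely one wants to show that for $P_4$ the relative Kleppner condition for $\F_3$ already coincides with Kleppner's condition, i.e.\ that $\varphi(z,x_k)$ being torsion for all $k$ forces each $\varphi(a_{ij},x_1x_2x_3)\,\omega(a_{ij},z)\,\overline{\omega(z,a_{ij})}$ to be torsion. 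From the computation above one knows a priori only that the product of these quantities over $1\leq i<j\leq 3$ equals $\prod_{k}\varphi(z,x_k)$ and is thus torsion; separating the individual factors requires exploiting the further relations linking the $\varphi(a_{ij},x_k)$ with the restriction of the commutator pairing $\chi_z$ to $\F_2\subset P_3$, and carrying this out carefully is the delicate point of the proof — which is also why the analogous assertion for $n\geq 4$ is only posed as a question.
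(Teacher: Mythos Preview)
Your treatment of the main implications matches the paper exactly: you identify (i) with the relative Kleppner condition for $(P_{n+1},\F_n,\sigma)$ and (iv) with Kleppner's condition for $(P_{n+1},\sigma)$, then invoke Theorem~\ref{SUT-rkc} for (i)~$\Rightarrow$~(ii),(iii) and the necessity of Kleppner's condition for (ii),(iii)~$\Rightarrow$~(iv). This is precisely what the paper means by ``a modification of Lemma~\ref{kleppner-P}'' and ``the same lines as the proof of Theorem~\ref{P-theorem}''. Your explicit computation of the character $\chi_\zeta$ on the generators $a_{k,n+1}$ and $a_{ij}$ is a correct fleshing-out of what the paper leaves implicit.

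The gap is in the last implication. You correctly flag (iv)~$\Rightarrow$~(i) for $n=3$ as the hard point and outline a strategy, but you do not carry it out; your final paragraph is a description of what would need to be shown, not a proof. The paper's own proof gives no additional argument here either --- it merely refers back to Theorem~\ref{P-theorem}, whose proof settles only $n=2$ (via Example~\ref{ex2}). Indeed, since Theorem~\ref{P-theorem} is the special case $\omega\sim 1$ of the present proposition and explicitly leaves (iv)~$\Rightarrow$~(i) open for $n\geq 3$ (see the Question following it), the general case cannot be easier. The ``$n=3$'' in the statement is therefore almost certainly a slip for ``$n=2$''. For $n=2$ the implication is immediate: $P_2=\Z$ with $a_{12}=z$, any $\omega$ on $\Z$ satisfies $\omega(a_{12},z)=\omega(z,a_{12})$, so the first bullet of (iv) reads ``$\varphi(z,x_1x_2)$ nontorsion'', which forces some $\varphi(z,x_k)$ to be nontorsion. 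Your instinct that the $P_4$ case would require genuinely new input is correct; the resolution is that the intended statement is $n=2$, not that there is a hidden argument for $n=3$.
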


\begin{proof}
A modification of Lemma~\ref{kleppner-P} shows that (iv) is equivalent to Kleppner's condition, while (i) is equivalent to the relative Kleppner condition.
The argument now goes along the same lines as the \hyperref[proof-P]{proof} of Theorem~\ref{P-theorem}.
\end{proof}

\section{\texorpdfstring{$C^*$}{C*}-simplicity of braid groups on infinitely many strands}\label{sec:infinite}

The braid group on infinitely many strands is the group $B_\infty$ generated by $\{s_i\}_{i=1}^\infty$ subject to the same relations as defining $B_n$. Moreover, there are embeddings $B_n\to B_{n+1}$, given by $s_i\mapsto s_i$ for $1\leq i\leq n-1$, and it is easy to see that $B_\infty$ coincides with the direct limit of this system. For $m<n\leq\infty$, we will consider $B_m$ as the subgroup of $B_n$ coming from the first $m$ strands.

Recall from \eqref{braid-center} that for $3\leq n<\infty$, the center of $B_n$ is generated by an element of length $n(n-1)$, and all generators $s_i$ of $B_n$ are needed to write word that is a central element. Thus, if $m<n$, then every nontrivial element of $Z(B_m)$ does not belong to $Z(B_n)$, that is,
\begin{equation}\label{central-intersection}
Z(B_m) \cap Z(B_n) = \{e\}.
\end{equation}
Therefore, $B_\infty$ has trivial center. In fact, all of its nontrivial conjugacy classes are infinite, since if $g$ has a finite conjugacy class in $B_\infty$, then there is some $N\geq 3$ such that $g\in B_n$ and $g$ has a finite conjugacy class in $B_n$ for all $n\geq N$. Then $g\in Z(B_n)$ for all $n\geq N$, so $g=e$. Thus $W^*(B_\infty)$ is a factor (see \cite[Corollary~5.3]{GoKo} for a few more properties of $W^*(B_\infty)$).

Furthermore, let $S_\infty$ denote the infinite symmetric group, and define the pure braid group on infinitely many strands, $P_\infty$, as the kernel of the surjection $B_\infty \to S_\infty$, sending $s_i^2$ to the identity for all $i$. Analogously to the above, it is not hard to see that $P_\infty$ coincides with the direct limit of the embeddings $P_n\to P_{n+1}$ given by $a_{ij}\mapsto a_{ij}$ for $1\leq i<j\leq n$.

Finally, we define Artin's representation $\alpha\colon B_\infty \to \aut\F_\infty$ similarly as in \eqref{artin-rep}. 

\begin{lemma}
The semidirect products $\F_\infty\rtimes_\alpha B_\infty$ and $\F_\infty\rtimes_\alpha P_\infty$ are both $C^*$-simple.
\end{lemma}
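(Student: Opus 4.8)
The plan is to invoke the relative Kleppner machinery of Theorem~\ref{SUT-rkc} with the \emph{trivial} $2$-cocycle, taking $H=\F_\infty$ inside $G=\F_\infty\rtimes_\alpha B_\infty$ (and likewise inside $G=\F_\infty\rtimes_\alpha P_\infty$). Since $C^*_r(\F_\infty)$ is simple and has a unique trace — the infinite-rank version of Powers' theorem \cite{Powers}, which one may also see from the fact that $\F_\infty=\bigcup_n\F_n$ is an ascending union of the $C^*$-simple groups $\F_n$, $n\geq2$, and an inductive limit of simple unital $C^*$-algebras with injective connecting maps is simple — Theorem~\ref{SUT-rkc} applied to the triple $(G,\F_\infty,1)$ reduces the whole statement to checking the relative Kleppner condition for $(G,\F_\infty,1)$. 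Because the cocycle is trivial, every element of $G$ is automatically $1$-regular with respect to $\F_\infty$, so this condition says exactly that $C_{\F_\infty}(g)$ is infinite for every $g\in G\setminus\F_\infty$.

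The first step is to show that $\F_\infty$ is self-centralizing in $G$, i.e.\ $C_G(\F_\infty)=\{e\}$. Writing a putative central element as $g=ya$ with $y\in\F_\infty$ and $a\in B_\infty$ (resp.\ $a\in P_\infty$), the relations $gx_ig^{-1}=x_i$ become $\alpha(a)(x_i)=y^{-1}x_iy$ for all $i$. Any such $a$ lies in some $B_N$ (resp.\ $P_N$), so $\alpha(a)$ fixes $x_i$ for all $i>N$ by \eqref{artin-rep}; hence $y$ commutes with infinitely many free generators of $\F_\infty$, which forces $y=e$, and then $\alpha(a)=\mathrm{id}$, so $a=e$ by injectivity of Artin's representation (it restricts to the injective representation of each $B_N$). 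This is essentially the computation at the end of the proof of Lemma~\ref{braid-centralizer}, simplified by the fact that $\F_\infty\rtimes_\alpha B_\infty$ and $\F_\infty\rtimes_\alpha P_\infty$ have trivial center: the central elements of the finite-strand stages $\F_n\rtimes_\alpha B_n$, which by \eqref{semidirect-center} have the form $x_1\cdots x_nz$ with $z$ the central element of $B_n$, have word length going to infinity and hence escape, exactly as in the discussion of $Z(B_\infty)$ preceding this lemma.

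Next I would plug this into the argument of Lemma~\ref{braid-centralizer} to get infiniteness of the $\F_\infty$-conjugacy classes outside $\F_\infty$. If $C_{\F_\infty}(g)$ were finite for some $g\in G\setminus\F_\infty$, then for any $\F_\infty$-conjugate $g'=xgx^{-1}$ one has $g^{-1}g'\in\F_\infty$ with $\lvert C_{\F_\infty}(g^{-1}g')\rvert\leq\lvert C_{\F_\infty}(g)\rvert^2<\infty$; since $\F_\infty$ is icc this forces $g^{-1}g'=e$, i.e.\ $C_{\F_\infty}(g)=\{g\}$, i.e.\ $g$ centralizes $\F_\infty$; by the previous step $g=e\in\F_\infty$, a contradiction. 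Hence $C_{\F_\infty}(g)$ is infinite for all $g\in G\setminus\F_\infty$, the relative Kleppner condition for $(G,\F_\infty,1)$ holds, and Theorem~\ref{SUT-rkc} gives that $C^*_r(\F_\infty\rtimes_\alpha B_\infty)$ and $C^*_r(\F_\infty\rtimes_\alpha P_\infty)$ are simple, i.e.\ both groups are $C^*$-simple. The same argument with ``simple'' replaced by ``has a unique tracial state'' records that they also enjoy the unique trace property.

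The main, and essentially only, obstacle is the centralizer computation $C_G(\F_\infty)=\{e\}$; everything else is a direct application of Theorem~\ref{SUT-rkc} and a reuse of Lemma~\ref{braid-centralizer}. Within that computation the one thing to double-check is that every element of $B_\infty$ or $P_\infty$ moves only finitely many of the generators $x_1,x_2,\dots$ of $\F_\infty$, so that the ``commutes with infinitely many generators'' conclusion is legitimate; this is immediate from the direct-limit descriptions $B_\infty=\bigcup_nB_n$ and $P_\infty=\bigcup_nP_n$ together with the fact that $\alpha(s_i)$ fixes $x_j$ for $j\notin\{i,i+1\}$.
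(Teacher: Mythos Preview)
Your argument is correct and follows the same overall plan as the paper's: verify the relative Kleppner condition for $(\F_\infty\rtimes_\alpha B_\infty,\F_\infty,1)$ and then apply Theorem~\ref{SUT-rkc}. Two minor differences are worth recording. First, your computation that the centralizer of $\F_\infty$ in $G$ is trivial is more elementary than the paper's: you note that any $a\in B_\infty$ lies in some $B_N$ and hence fixes $x_i$ for all $i>N$, so the relation $\alpha(a)(x_i)=y^{-1}x_iy$ forces $y$ to commute with two distinct free generators, whence $y=e$ and then $a=e$ by faithfulness of $\alpha$. The paper instead reuses Lemma~\ref{braid-centralizer} at each finite stage to place $g$ in $Z(\F_n\rtimes_\alpha B_n)$ for all large $n$ and then lets the word-length of the central generator \eqref{semidirect-center} escape to infinity. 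Second, for $\F_\infty\rtimes_\alpha P_\infty$ you rerun the same argument verbatim, whereas the paper takes a shortcut: $\F_\infty\rtimes_\alpha P_\infty$ is normal in $\F_\infty\rtimes_\alpha B_\infty$, so its $C^*$-simplicity follows from \cite[Theorem~3.14]{BKKO}. Both routes are fine; yours is more self-contained, the paper's is a line shorter on the $P_\infty$ side.
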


\begin{proof}
The triple $(\F_\infty\rtimes_\alpha B_\infty,\F_\infty,1)$ always satisfies the relative Kleppner condition by modifying the argument of Lemma~\ref{braid-centralizer}. Indeed, if there is $g\in\F_\infty\rtimes_\alpha B_\infty$ such that $C_{\F_\infty}(g)$ is finite, then $C_{\F_\infty}(g)=\{g\}$, that is, $g=ya$ for $y\in\F_\infty$, $a\in B_\infty$, and $gx_i=x_ig$ for all $i$, where $\{x_i\}_{i=1}^\infty$ are the generators of $\F_\infty$. There is some $N\geq 3$ such that $g\in \F_n\rtimes_\alpha B_n$ for all $n\geq N$, meaning that $C_{\F_n}(g)=\{g\}$ for all $n\geq N$, so $g\in Z(\F_n\rtimes_\alpha B_n)$ for all $n\geq N$ by Lemma~\ref{braid-centralizer}. Then \eqref{semidirect-center} says that we must have $a\in Z(B_n)$ for all $n\geq N$, so $a=e$ by \eqref{central-intersection}, and hence $g\in\F_\infty$. 

Therefore, it follows from Proposition~\ref{SUT-rkc} that $\F_\infty\rtimes_\alpha B_\infty$ is $C^*$-simple.
Since $\F_\infty\rtimes_\alpha P_\infty$ is normal in $\F_\infty\rtimes_\alpha B_\infty$, it is also $C^*$-simple by \cite[Theorem~1.4]{BKKO}.
\end{proof}

\begin{theorem}
The groups $P_\infty$ and $B_\infty$ are both $C^*$-simple.
\end{theorem}

\begin{proof}
First, consider the surjection $P_\infty \to P_\infty$ given by $a_{1,i}\mapsto e$ and $a_{i,j}\mapsto a_{i-1,j-1}$ for all $2\leq i<j$. This map restricts to surjections $P_{n+1}\to P_n$ with kernel being the free group $\F_n$ generated by $\{a_{1,j}\}_{j=2}^{n+1}$ (see \cite[Corollary~1.23]{KT}; this is just Artin's representation with the generators in reversed order). These restrictions give rise to short exact sequences $\F_n \to P_{n+1} \to P_n$ compatible with the embeddings $P_n\to P_{n+1}$, so $P_\infty\to P_\infty$ becomes its direct limit, and the kernel is the free group $\F_\infty$ on the infinite number of generators $\{a_{1,j}\}_{j=2}^\infty$. Thus we have that $P_\infty \simeq \F_\infty \rtimes_\alpha P_\infty$ via the map $a_{1,i}\mapsto x_{i-1}$ and $a_{i,j}\mapsto a_{i-1,j-1}$ for all $2\leq i<j$, and the latter group is shown to be $C^*$-simple in the previous lemma.

To show that $B_\infty$ is $C^*$-simple, we will explain that $(B_\infty,P_\infty,1)$ satisfies the relative Kleppner condition, and then use Proposition~\ref{SUT-rkc} once more to conclude.

So suppose that $g\in B_\infty$ is such that $C_{P_\infty}(g)$ is finite. Arguing as in Lemma~\ref{braid-centralizer}, using that $P_\infty$ is icc (since it is $C^*$-simple), we get that $C_{P_\infty}(g)=\{g\}$, i.e., $gx=xg$ for all $x\in P_\infty$. Moreover, there is some $N\geq 3$ such that $g\in B_n$ for all $n\geq N$, meaning that $C_{P_n}(g)=\{g\}$ for all $n\geq N$. Since $P_n$ has finite index in $B_n$, for every $n\geq N$, there is some $k(n)\geq 1$ such that $g^{k(n)}\in P_n$, so if $g$ commutes with all elements of $P_n$, then $g^{k(n)}\in Z(P_n)$. Therefore, if $N\leq m<n$, then $g^{k(m)k(n)}\in Z(P_m)\cap Z(P_n)=\{e\}$, using \eqref{central-intersection}. Since $B_n$ is torsion-free for all $n$ (see e.g.\ \cite[Corollary~1.29]{KT}), we must have $g=e$. This completes the proof.
\end{proof}


\section*{Acknowledgements}

The author is funded by the Research Council of Norway through FRINATEK, project no.~240913. Part of the work was done when attending the research program ``Classification of operator algebras: complexity, rigidity, and dynamics'' at the Mittag-Leffler Institute outside Stockholm, January--February 2016. Finally, the author would like to thank Fred Cohen, Alex Suciu, and Dan Cohen for helpful e-mail correspondence on braid groups.

\bibliographystyle{plain}

\end{document}